\DeclareMathOperator*{\minimize}{minimize}
\DeclareMathOperator*{\minimum}{minimum}
\DeclareMathOperator*{\subject}{subject\ to}
\DeclareMathOperator*{\maximum}{max}
\DeclareMathOperator*{\argmin}{argmin}
\DeclareMathOperator*{\diag}{diag}
\DeclareMathOperator*{\parent}{par}
\DeclareMathOperator*{\children}{ch}
\DeclareMathOperator*{\rank}{rank}
\DeclareMathOperator*{\blkdiag}{blk\ diag}
\DeclareMathOperator*{\leaves}{leaves}
\DeclareMathOperator*{\Ne}{Ne}
\newcommand{\Bigblacksquare}{\mathord{\includegraphics[height=8ex]{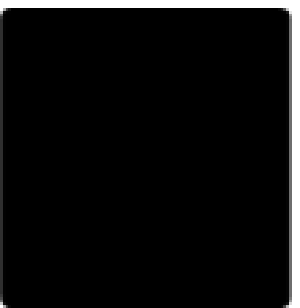}}}
\newcommand{\bigblacksquare}{\mathord{\includegraphics[height=6ex]{square.eps}}}
\newcounter{remcount}
\newtheorem{rem}[remcount]{Remark}
\definecolor{red}{rgb}{1,0,0}
\begin{document}
\doi{10.1080/1055.6788.YYYY.xxxxxx}
\issn{1029-4937}
\issnp{1055-6788}
\jvol{00} \jnum{00} \jyear{2014} \jmonth{October}

\markboth{Taylor \& Francis and I.T. Consultant}{Optimization Methods and Software}

\title{Distributed Primal-dual Interior-point Methods for Solving Loosely Coupled Problems Using Message Passing$^*$\thanks{$^*$This work has been supported by the Swedish Department of Education within the ELLIIT project.}}


\author{Sina Khoshfetrat Pakazad$^{1}$\thanks{$^{1}$ Sina Khoshfetrat Pakazad and Anders Hansson are with the Division of Automatic Control, Department of Electrical Engineering, Link\"oping University, Sweden. Email: \{sina.kh.pa, hansson\}@isy.liu.se.}, Anders Hansson$^{1}$ and Martin S. Andersen$^{2}$ \thanks{$^{2}$Martin S. Andersen is with the Department of Applied Mathematics and Computer Science, Technical University of Denmark. Email:  mskan@dtu.dk.}}

\maketitle


\begin{abstract}
In this paper, we propose a distributed algorithm for solving loosely coupled problems with chordal sparsity which relies on primal-dual interior-point methods. We achieve this by distributing the computations at each iteration, using message-passing. In comparison to already existing distributed algorithms for solving such problems, this algorithm requires far less number of iterations to converge to a solution with high accuracy. Furthermore, it is possible to compute an upper-bound for the number of required iterations which, unlike already existing methods, \emph{only} depends on the coupling structure in the problem. We illustrate the performance of our proposed method using a set of numerical examples.

\begin{keywords} Distributed optimization; primal-dual interior-point method; message-passing; high precision solution.
\end{keywords}
\begin{classcode} \end{classcode}\bigskip

\end{abstract}

\section{Introduction}\label{sec:Intro}

Centralized algorithms for solving optimization problems rely on the existence of a central computational unit powerful enough to solve the problem in a timely manner, and they render useless in case we lack such a unit. Also such algorithms become unviable when it is impossible to form the problem in a centralized manner, for instance due to structural constraints including privacy requirements. In cases like these, distributed optimization algorithms are the only resort for solving optimization problems, e.g., see \cite{ber:97,eck:89,boyd:11,ned:09,ned:10}. In this paper we are interested in devising efficient distributed algorithms for solving convex optimization problems in the form
\begin{subequations}\label{eq:EOP}
\begin{align}
\minimize   \quad &f_1(x)+ \dots + f_N(x)\\
\subject \quad & G^i(x) \preceq 0, \quad i=1, \dots, N, \\
  & A^i x = b^i, \hspace{6mm} i=1, \dots, N,
\end{align}
\end{subequations}
where $f_i \ : \ \mathbb R^{n}\rightarrow \mathbb R$, $G^i \ : \ \mathbb R^{n}\rightarrow \mathbb R^{m_i}$, $A^i \in \mathbb R^{p_i \times n}$ with $\sum_{i=1}^{N} p_i < n$ and $\rank(A^i) = p_i$ for all $i = 1, \dots, N$. Here $\preceq$ denotes the component-wise inequality. This problem can be seen as a combination of $N$ coupled subproblems, each of which is defined by an objective function $f_i$ and by constraints that are expressed by $G^i$ and matrices $A^i$ and $b^i$. Furthermore, we assume that these subproblems are only dependent on a few elements of $x$, and that they are loosely coupled. The structure in such problems is a form of partial separability, which implies that the Hessian of the problem is sparse, see e.g., \cite{sun:14} and references therein.  Existing distributed algorithms for solving \eqref{eq:EOP}, commonly solve the problem using a computational network with $N$ computational agents, each of which is associated with its own local subproblem. The graph describing this computational network has the node set $V = \{1, \dots, N \}$ with an edge between any two nodes in case they need to communicate with one another. The existence of an edge also indicates existence of coupling among the subproblems associated to neighboring agents. This graph is referred to as the \emph{computational} graph of the algorithm and matches the coupling structure in the problem, which enables us to solve the problem distributedly while providing complete privacy among the agents.

Among different algorithms for solving problems like \eqref{eq:EOP} distributedly, the ones based on first order methods are among the simplest ones. These algorithms are devised by applying gradient/subgradient or proximal point methods to the problem or an equivalent reformulations of it, see e.g., \cite{ned:09,ned:10,ber:97,eck:89,boyd:11,com:11}. In this class, algorithms that are based on gradient or subgradient methods, commonly require simple local computations. However, they are extremely sensitive to the scaling of the problem, see e.g., \cite{ned:09,ned:10}. Algorithms based on proximal point methods alleviate the scaling sensitivity issue, see e.g., \cite{ber:97,eck:89,boyd:11,com:11}, but this comes at a price of more demanding local computations and/or more sophisticated communication protocols among agents, see e.g., \cite{sum:12,ohl:13,gol:12,gols:12}.

Despite the effectiveness of this class of algorithms, they generally still require many iterations to converge to an accurate solution. In order to improve the convergence properties of the aforementioned algorithms, there has recently been a surge of interest in devising distributed algorithms using second order methods, see e.g., \cite{Chu:11,wei:13,nec:09,kho:14,ang:14}. In \cite{nec:09}, the authors propose a distributed optimization algorithm based on a Lagrangian dual decomposition technique which enables them to use second order information of the dual function to update the dual variables within a dual interior-point framework. To this end, at each iteration, every agent solves a constrained optimization problem for updating local primal variables and then communicates with all the other agents to attain the necessary information for updating the dual variables. This level of communication is necessary due to the coupling in the considered optimization problem. The authors in \cite{wei:13} present a distributed Newton method for solving a network utility maximization problem. The proposed method relies on the special structure in the problem, which is that the objective function is given as a summation of several decoupled terms, each of which depends on a single variable. This enables them to utilize a certain matrix splitting method for computing Newton directions distributedly. In \cite{kho:14,ang:14} the authors put forth distributed primal and primal-dual interior-point methods that rely on proximal splitting methods, particularly ADMM, for solving for primal and primal-dual directions, distributedly. This then allows them to propose distributed implementations of their respective interior-point methods. One of the major advantages of the proposed algorithms in \cite{wei:13,kho:14,ang:14} lies in the fact that the required local computations are very simple. These approaches are based on inexact computations of the search directions, and they rely on first order or proximal methods to compute these directions. Generally the number of required iterations to compute the directions depends on the desired accuracy, and in case they require high accuracy for the computed directions, this number can grow very large. This means that commonly the computed directions using these algorithms are not accurate, and particularly the agents only have approximate consensus over the computed directions. This inaccuracy of the computed directions can also sometimes adversely affect the number of total primal or primal-dual iterations for solving the problem.


In this paper we propose a distributed primal-dual interior-point method and we evade the aforementioned issues by investigating  another distributed approach to solve for primal-dual directions. To this end we borrow ideas from so-called message-passing algorithms for exact inference over probabilistic graphical models, \cite{kol:09,pea:82}. In this class of inference methods, message-passing algorithms are closely related to non-serial dynamic programming, see e.g., \cite{ber:73,moa:07,wai:05,kol:09}. Non-serial dynamic programming techniques, unlike serial dynamic programming, \cite{ber:00}, that are used for solving problems with chain-like or serial coupling structure, are used to solve problems with general coupling structure. Specifically, a class of non-serial dynamic programming techniques utilize a tree representation of the coupling in the problem and use similar ideas as in serial techniques to solve the problem efficiently, see e.g., \cite{ber:73,wai:05,moa:07,shc:07}. We here also use a similar approach for computing the primal-dual directions. As we will see later, this enables us to devise distributed algorithms, that unlike the previous ones compute the exact directions within a finite number of iterations. In fact, this number can be computed a priori, and it only depends on the coupling structure in the problem. Unfortunately these advantages come at a cost. Particularly, these algorithms can only be efficiently applied to problems that are sufficiently sparse. Furthermore, for these algorithms the computational graphs can differ from the coupling structure of the problem, and hence they can only provide partial privacy among the agents. The approach presented in this paper is also closely related to multi-frontal factorization techniques for sparse matrices, e.g., see \cite{liu:92,duf:83,and:13}. In fact we will show that the message-passing framework can be construed as a distributed multi-frontal factorization method using fixed pivoting for certain sparse symmetric indefinite matrices. To the best knowledge of the authors the closest approach to the one put forth in this paper is the work presented in \cite{gon:07,gon:09}. The authors for these papers, propose an efficient primal-dual interior-point method for solving problems with a so-called nested block structure. Specifically, by exploiting this structure, they  present an efficient way for computing primal-dual directions by taking advantage of parallel computations when computing factorization of the coefficient matrix in the augmented system at each iteration. In this paper, we consider a more general coupling structure and focus on devising a distributed algorithm for computing the search directions, and we provide assurances that this can be done even when each agent has a limited access to information regarding the problem, due to privacy constraints.

\subsection*{Outline}
Next we first define some of the common notations used in this paper, and in Section~\ref{sec:CPP} we put forth a general description of coupled optimization problems and describe mathematical and graphical ways to express the coupling in the problem. In Section \ref{sec:chordal} we review some concepts related to chordal graphs. These are then used in Section~\ref{sec:OMP} to describe distributed optimization algorithms based on message-passing for solving coupled optimization problems. We briefly describe the primal-dual interior-point method in Section \ref{sec:PDIPM}. In Section \ref{sec:DPDIPM}, we first provide a formal mathematical description for loosely coupled problems and then we show how primal-dual methods can be applied in a distributed fashion for solving loosely coupled problems. Furthermore, in this section we discuss how the message-passing framework is related to multi-frontal factorization techniques. We test the performance of the algorithm using a numerical example in Section~\ref{sec:number}, and finish the paper with some concluding remarks in Section \ref{sec:conclude}.

\subsection*{Notation}
We denote by $\mathbb R$ the set of real scalars and by $\mathbb R^{n\times m}$ the set of real $n\times m$ matrices. With $\mathbf 1$ we denote a column vector of all ones. The set of $n \times n$ symmetric matrices are represented by $\mathbb S^n$. The transpose of a matrix $A$ is denoted by $A^T$ and the column and null space of this matrix is denoted by $\mathcal{C}(A)$ and $\mathcal N(A)$, respectively. We denote the set of positive integers $\{1,2,\ldots,p\}$ with $\mathbb{N}_p$. Given a set $J \subset \mathbb{N}_n$, the matrix $E_J \in \mathbb{R}^{|J|\times n}$ is the $0$-$1$ matrix that is obtained by deleting the rows indexed by $\mathbb{N}_n \setminus J$ from an identity matrix of order $n$, where $|J|$ denotes the number of elements in set $J$. This means that $E_Jx$ is a $|J|$- dimensional vector with the components of $x$ that correspond to the elements in $J$, and we denote this vector with $x_J$. With $x^{i,(k)}_l$ we denote the $l$th element of vector $x^i$ at the $k$th iteration. Also given vectors $x^i$ for $i= 1, \dots, N$, the column vector $(x^1, \dots, x^N)$ is all of the given vectors stacked.

\section{Coupled Optimization Problems}\label{sec:CPP}

Consider the following convex optimization problem
\begin{align}\label{eq:CP}
\minimize_{x} \quad F_1(x) + \dots + F_N(x),
\end{align}
where $F_i \ : \ \mathbb R^{n}\rightarrow \mathbb R$ for all $i = 1, \dots, N$. We assume that each function $F_i$ is only dependent on a small subset of elements of $x$. Particularly, let us denote the ordered set of these indices by $J_i \subseteq \mathbb N_n$. We also denote the ordered set of indices of functions that depend on $x_i$ with $\mathcal I_i = \{ k \ | \ i \in J_k\} \subseteq \mathbb N_N$. With this description of coupling within the problem, we can now rewrite the problem in~\eqref{eq:CP}, as
\begin{align}\label{eq:CPS}
\minimize_{x} & \quad   \bar F_1(E_{J_1}x) + \dots + \bar F_N(E_{J_N}x),
\end{align}
where $E_{J_i}$ is a $0$--$1$ matrix that is obtained from an identity matrix of order $n$ by deleting the rows indexed by $\mathbb{N}_n \setminus J_i$.  The functions $\bar F_i \ : \ \mathbb R^{|J_i|} \rightarrow \mathbb R$ are lower dimensional descriptions of $F_i$s such that $F_i(x) = \bar F_i(E_{J_i}x)$ for all $x \in \mathbb R^n$ and $i = 1, \dots, N$. For instance consider the following optimization problem
\begin{align}
\minimize_x \quad F_1(x) + F_2(x) +  F_3(x) + F_4(x) + F_5(x) + F_6(x),
\end{align}
and let us assume that $x \in \mathbb R^8$, $J_1 = \{ 1, 3 \}$, $J_2 = \{ 1, 2, 4 \}$, $J_3 = \{ 4, 5 \}$, $J_4 = \{ 3, 4 \}$, $J_5 = \{ 3, 6, 7 \}$ and $J_6 = \{ 3, 8 \}$. With this dependency description we then have $\mathcal I_1 = \{ 1, 2 \}$, $\mathcal I_2 = \{ 2 \}$, $\mathcal I_3 = \{ 1, 4, 5, 6 \}$, $\mathcal I_4 = \{ 2, 3, 4 \}$, $\mathcal I_5 = \{ 3 \}$, $\mathcal I_6 = \{ 5 \}$, $\mathcal I_7 = \{ 5 \}$  and $\mathcal I_8 = \{ 6 \}$.  This problem can then be written in the same format as in \eqref{eq:CPS} as
\begin{multline}\label{eq:example}
\minimize_x \quad \bar F_1(x_1,x_3) + \bar F_2(x_1, x_2, x_4) +\\ \bar F_3(x_4, x_5) + \bar F_4(x_3, x_4) + \bar F_5(x_3, x_6, x_7) + \bar F_6(x_3, x_8).
\end{multline}
The formulation of coupled problems as in \eqref{eq:CPS} enables us to get a more clear picture of the coupling in the problem. Next we describe how the coupling structure in~\eqref{eq:CP} can be expressed graphically using undirected graphs.
\subsection{Coupling and Sparsity Graphs}


A graph $G$ is specified by its vertex and edge sets $V$ and $\mathcal E$, respectively.  The coupling structure in \eqref{eq:CP} can be described using an undirected graph with node or vertex set $V_c = \{ 1, \dots, N \}$ and the edge set $\mathcal{E}_c$ with $(i,j) \in \mathcal E_c$ if and only if $J_i \cap J_j \neq \emptyset$. We refer to this graph, $G_c$, as the \emph{coupling graph} of the problem. Notice that all sets $\mathcal I_i$ induce complete subgraphs on the coupling graph of the problem. Another graph that sheds more light on the coupling structure of the problem is the so-called \emph{sparsity graph}, $G_s$, of the problem. This graph is also undirected, though with node or vertex set $V_s = \{ 1, \dots, n \}$ and the edge set $\mathcal{E}_s$ with $(i,j) \in \mathcal E_s$ if and only if $\mathcal I_i \cap \mathcal I_j \neq \emptyset$. Similarly, all sets $J_i$ induce complete subgraphs on the sparsity graph of the problem. Let us now reconsider the example in~\eqref{eq:example}. The sparsity and coupling graphs for this problem are illustrated in Figure \ref{fig:SG}, on the left and right respectively. It can then be verified that all $J_i$s and $\mathcal I_i$s induce complete graphs over coupling and sparsity graphs, respectively.
\begin{figure}[t]
\begin{center}
\includegraphics[width=8cm]{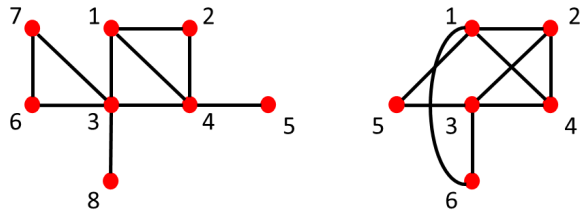}    
\caption{\small The sparsity and coupling graphs for the problem in~\eqref{eq:example}.\normalsize }
\label{fig:SG}
\end{center}
\end{figure}

As we will see later graph representations of the coupling structure in problems play an important role in designing distributed algorithms for solving coupled problems and gaining insight regarding their distributed implementations. Specifically, chordal graphs and their characteristics play a major role in the design of our proposed algorithm. This is the topic of the next section.


\section{Chordal Graphs}\label{sec:chordal}

A graph $G(V,\mathcal E)$ with vertex set $V$ and edge set $\mathcal E$ is chordal if every of its cycles of length at least four has a chord, where a chord is an edge between two non-consecutive vertices in a cycle, \cite[Ch. 4]{gol:04}. A clique of $G$ is a \emph{maximal} subset of $V$ that induces a complete subgraph on $G$. Consequently, no clique of $G$ is entirely contained in any other clique, \cite{blp:94}. Let us denote the set of cliques of $G$ as $\mathbf C_G = \{ C_1, \dots, C_q \}$. There exists a tree defined on $\mathbf C_G $ such that for every $C_i, C_j \in \mathbf C_G $ with $i \neq j$, $C_i \cap C_j$ is contained in all the cliques in the path connecting the two cliques in the tree. This property is called the clique intersection property, and trees with this property are referred to as clique trees. For instance the graph on the left in Figure \ref{fig:SG} is chordal and has five cliques, namely $C_1 = \{ 1, 2, 4 \}$, $C_2 = \{ 1, 3, 4 \}$, $C_3 = \{ 4, 5 \}$, $C_4 = \{ 3, 6, 7 \}$ and $C_5 = \{ 3, 8 \}$. A clique tree over these cliques is given in Figure \ref{fig:SC}. This tree then satisfies the clique intersection property, e.g., notice that $C_2 \cap C_3 = \{ 4 \}$ and the only clique in the path between $C_2$ and $C_3$, that is $C_1$, also includes $\{ 4 \}$.

Chordal graphs and their corresponding clique trees play a central role in the design of the upcoming algorithms. For chordal graphs there are efficient methods for computing cliques and clique trees. However, the graphs that we will encounter, particularly the sparsity graphs, do not have to be chordal. As a result, next and for the sake of completeness we first review simple heuristic methods to compute a chordal embedding of such graphs, where a chordal embedding of a graph $G(V,\mathcal E)$ is a chordal graph with the same vertex set and an edge set $\mathcal E_e$ such that $\mathcal E \subseteq \mathcal E_e$. We will also explain how to compute its cliques and the corresponding clique tree.

\subsection{Chordal Embedding and Its Cliques}

Greedy search methods are commonly used for computing chordal embeddings of graphs, where one such method is presented in Algorithm~\ref{alg:embed}, \cite{cor:01}, \cite{kol:09}. The graph $G$  with the returned edge set $\mathcal E$ will then be a chordal graph.
\begin{algorithm}[tb]
\caption{Greedy Search Method for Chordal Embedding}\label{alg:embed}
\begin{algorithmic}[1]
\small
\State{Given a graph $G(V,\mathcal E)$ with $V = \{ 1, \dots, n\}$, $\mathbf C_G  = \emptyset$, $V_t = V$, $\mathcal E_t = \mathcal E$ and $flag = 1$}
\Repeat
\State{$i = $ vertex in $V_t$ with the smallest number of neighbors based on $\mathcal E_t$}
\State{Connect all the nodes in $\text{Ne}(i)$ to each other and add the newly generated edges to $\mathcal E_t$ and $\mathcal E$}
\State{$C_t = \{i\} \cup \text{Ne}(i)$}
\State{$\mathcal E_t = \mathcal E_t \setminus \left\{ (i,j) \in \mathcal E_t \ \big| \ j \in \text{Ne}(i)   \right\}$}
\State{$V_t = V_t \setminus \{i\}$}
\For{$k = 1 \ : \ |\mathbf C_G |$}
\If {$C_t \subseteq \mathbf C_G (k)$}
\State{$flag = 0$}
\EndIf
 \EndFor
\If {$flag$}
\State{$\mathbf C_G  = \mathbf C_G  \cup \{C_t\}$}
\EndIf
\State{$flag = 1$}
\Until {$V_t = \emptyset$}
\normalsize
\end{algorithmic}
\end{algorithm}
This algorithm also computes the set of cliques of the computed chordal embedding which are returned in the set $\mathbf C_G$. Notice that $\text{Ne}(i)$ in steps 4, 5 and 6 is defined based on the most recent description of the sets $V_t$ and $\mathcal E_t$. The criterion used in Step 3 of the algorithm for selecting a vertex is the so-called \emph{min-degree criterion}. There exist other versions of this algorithm that utilize other criteria, e.g., \emph{min-weight}, \emph{min-fill} and \emph{weighted-min-fill}. Having computed a chordal embedding of the graph and its clique set, we will next review how to compute a clique tree over the computed clique set.

\subsection{Clique Trees}

Assume that a set of cliques for a chordal graph $G$ is given as $\mathbf C_G  = \{ C_1, C_2, \dots, C_q \}$. In order to compute a clique tree over the clique set we need to first define a weighted undirected graph, $W$, over $V_W = \{ 1, \dots, q \}$ with edge set $\mathcal E_W$ where $(i, j) \in \mathcal E_W$ if and only if $C_i \cap C_j \neq \emptyset$, where the assigned weight to this edge is equal to $\big | C_i \cap C_j \big |$. A clique tree over $C_G$ can be computed by finding any maximum spanning tree of the aforementioned weighted graph. This means finding a tree in the graph that contains all its nodes and edges with maximal accumulated weight. An algorithm to find such a tree is presented in Algorithm~\ref{alg:span}, \cite{cor:01}, \cite{kol:09}. The tree described by the vertex set $V_t$ and edge set $\mathcal E_t$ is then a clique tree.
\begin{algorithm}[tb]
\caption{Maximum Weight Spanning Tree}\label{alg:span}
\begin{algorithmic}[1]
\small
\State{Given a weighted graph $W(V_W,\mathcal E_W)$ with $V_W = \{ 1, \dots, q\}$, $V_t = {1}$ and $\mathcal E_t = \emptyset$}
\Repeat
\State{$\mathcal E = \left\{ (i,j) \in \mathcal E_W \ \big| \ i \in V_t, j \notin V_t\right\}$}
\State{$(\bar i,\bar j) = (i,j) \in \mathcal E$ with the highest weight}
\State{$V_t = V_t \cup \{ \bar j \}$}
\State{$\mathcal E_t = \mathcal E_t \cup \{ (\bar i,\bar j)  \}$}
\Until {$V_t =  V_W$}
\normalsize
\end{algorithmic}
\end{algorithm}
We will now discuss distributed optimization using message-passing.
\section{Optimization Over Clique Trees}\label{sec:OMP}

In this section, we describe a distributed optimization algorithm based on message-passing. Particularly, we focus on the building blocks of this algorithm, namely we will provide a detailed description of its computational graph,  messages exchanged among agents, the communication protocol they should follow and how they compute their corresponding optimal solutions. The convergence and computational properties of such methods, within exact inference over probabilistic graphical models, are extensively discussed in \cite[Ch. 10, Ch. 13]{kol:09}. For the sake of completeness and future reference, we here also review some of these results and provide proofs for these results using the unified notation in this paper, in the appendix.
\subsection{Distributed Optimization Using Message-passing}

Consider the optimization problem in \eqref{eq:CP}. Let $G_s(V_s,\mathcal E_s)$ denote the chordal sparsity graph for this problem and let $\mathbf C_s = \{ C_1, \dots, C_q  \}$ and $T(V_t, \mathcal E_t)$ be its set of cliques and a corresponding clique tree, respectively. It is possible to devise a distributed algorithm for solving this problem that utilizes the clique tree $T$ as its computational graph. This means that the nodes $V_t = \{ 1, \dots, q \}$ act as computational agents and collaborate with their neighbors that are defined by the edge set $\mathcal E_t$ of the tree. For example, the sparsity graph for the problem in \eqref{eq:example} has five cliques and a clique tree over these cliques is illustrated in Figure~\ref{fig:SC}. This means the problem can be solved distributedly using a network of five computational agents, each of which needs to collaborate with its neighbors as defined by the edges of the tree, e.g., Agent 2 needs to collaborate with agents $1, 4, 5$.

\begin{figure}[t]
\begin{center}
\includegraphics[width=7cm]{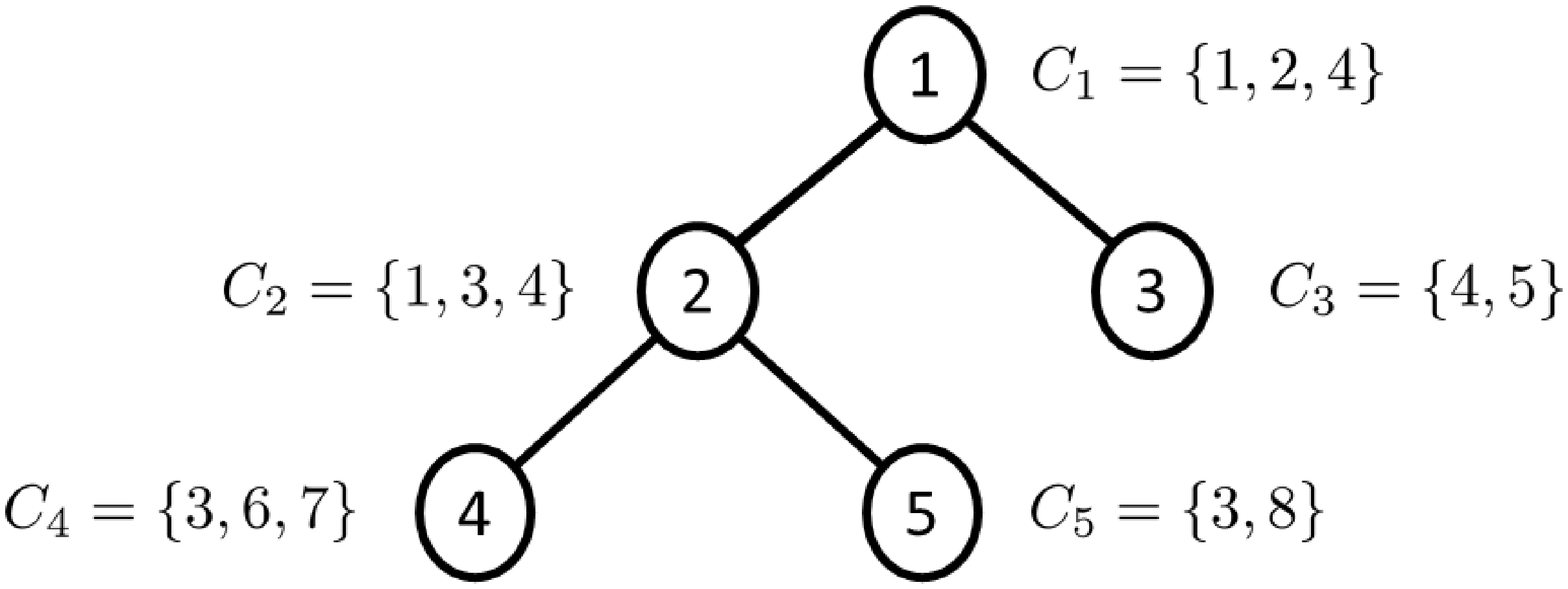}    
\caption{\small Clique tree for the sparsity graph of the problem \eqref{eq:example}.\normalsize }
\label{fig:SC}
\end{center}
\end{figure}

In order to specify the messages exchanged among these agents, we first assign different terms of the objective function in \eqref{eq:CP} to each agent. A valid assignment in this framework is that $F_i$ can only be assigned to agent $j$ if $J_i \subseteq C_j$. We denote the ordered set of indices of terms of the objective function assigned to agent $j$ by $\phi_j$. For instance, for the problem in \eqref{eq:example}, assigning $\bar F_1$ and $\bar F_4$ to Agent 2 would be a valid assignment since $J_1, J_4 \subseteq C_2$ and hence $\phi_2 = \{ 1, 4 \}$. Notice that the assignments are not unique and for instance there can exist agents $j$ and $k$ with $j \neq k$ so that $J_i \subseteq C_j$ and $J_i \subseteq C_k$ making assigning $F_i$ to agents $j$ or $k$ both valid. Also for every term of the objective function there will always exist an agent that it can be assigned to, which is proven in the following proposition.
\begin{proposition}
For each term $F_i$ of the objective function, there always exists a $C_j$ for which $J_i \subseteq C_j$.
\end{proposition}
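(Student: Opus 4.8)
The plan is to exploit the defining property of the sparsity graph $G_s$ together with the fact that the sets $J_i$ induce complete subgraphs on $G_s$, and then invoke the basic property that every clique (in the sense of a maximal complete subgraph) of a graph contains any complete subgraph that it can be ``grown'' from. First I would recall that, by the construction of $G_s$ in Section~\ref{sec:CPP}, for any two indices $k, l \in J_i$ we have $i \in \mathcal I_k \cap \mathcal I_l$, so that $\mathcal I_k \cap \mathcal I_l \neq \emptyset$ and hence $(k,l) \in \mathcal E_s$. Therefore the vertex set $J_i$ induces a complete subgraph of $G_s$ (this is exactly the statement already made in the text that ``all sets $J_i$ induce complete subgraphs on the sparsity graph'').

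Next I would use the elementary fact that in any graph, every set of vertices inducing a complete subgraph is contained in at least one clique. The argument is a one-line maximality/Zorn-type observation: starting from the complete set $J_i$, repeatedly add any vertex that is adjacent to all vertices currently in the set; since $V_s$ is finite this process terminates, and it terminates precisely at a set that induces a complete subgraph and is maximal with respect to inclusion, i.e.\ a clique $C_j \in \mathbf C_s$. By construction $J_i \subseteq C_j$, which is the desired conclusion. If one prefers to appeal to the chordal structure instead, one can note that a perfect elimination ordering of the chordal graph $G_s$ produces, for each vertex, a clique containing it and its later neighbors, and any complete subgraph is contained in one of these cliques; but the generic finiteness argument above is cleaner and sufficient.

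The only subtlety to state carefully is that a valid assignment requires $J_i \subseteq C_j$ where $C_j$ is a \emph{maximal} complete subgraph, not merely some complete subgraph; the maximality is what the termination-at-a-maximal-set step provides, so I would make sure to phrase the inductive step as ``extend until no further vertex can be added'' rather than stopping prematurely. I do not anticipate any real obstacle here: the statement is essentially a restatement of the standard graph-theoretic fact that complete subgraphs extend to cliques, specialized to the particular complete subgraphs $J_i$ of $G_s$. The one thing worth double-checking is the edge-case $|J_i| = 1$ (a single-variable term), where $J_i$ trivially induces a complete subgraph and the extension argument still applies verbatim, producing a clique containing that lone vertex.
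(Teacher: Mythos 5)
Your proof is correct and follows essentially the same route as the paper: observe that $J_i$ induces a complete subgraph on $G_s$ and then use the fact that every complete subgraph is contained in some maximal complete subgraph, i.e.\ a clique. The paper compresses the second step into ``by definition of cliques,'' while you spell out the finite extension argument; the content is the same.
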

\begin{proof}
Recall that each set $J_i$ induces a complete subgraph on the sparsity graph, $G_s$, of the problem. Then by definition of cliques, $J_i$ is either a subset of a clique or is a clique of the sparsity graph.
\end{proof}
\begin{figure}[t]
\begin{center}
\includegraphics[width=5.9cm]{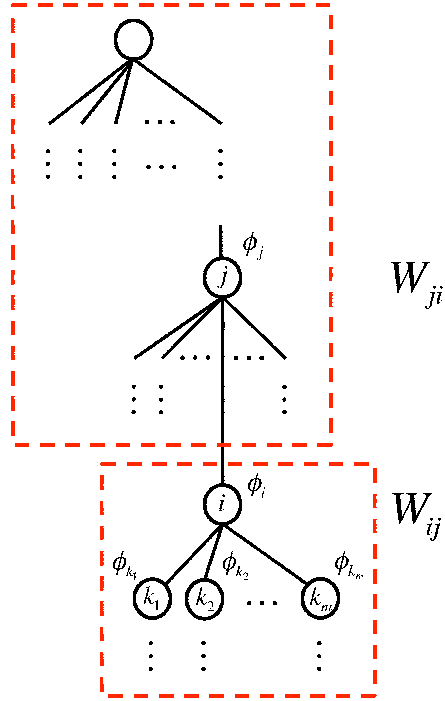}    
\caption{\small Clique tree for a sparsity graph $G_s$.\normalsize }
\label{fig:CT}
\end{center}
\end{figure}
Before we continue with the rest of the algorithm description, we first need to define some new notations that are going to be extensively used in the following. Consider Figure \ref{fig:CT} which illustrates a clique tree $T(V_t, \mathcal E_t)$ for a given sparsity graph $G_s$. Each node in the tree is associated to a clique of $G_s$ and let $W_{ij}$ denote the set of indices of cliques that are on the node $i$-side of edge $(i,j) \in \mathcal E_t$. Similarly, $W_{ji}$ denotes the same but for the ones on the $j$-side of $(i,j)$. Also we denote the set of indices of variables in the cliques specified by $W_{ij}$ by $V_{ij}$, i.e., $V_{ij} = \bigcup_{k \in W_{ij}} C_k$. Similarly the set of indices of variables in cliques specified by $W_{ji}$ is denoted by $V_{ji}$. The set of all indices of objective function terms that are assigned to nodes specified by $W_{ij}$ is represented by $\Phi_{ij}$, i.e., $\Phi_{ij} = \bigcup_{k \in W_{ij}} \phi_{k}$, and the ones specified by $W_{ji}$ with $\Phi_{ji}$. In order to make the newly defined notations more clear, let us reconsider the example in \eqref{eq:example} and its corresponding clique tree in Figure \ref{fig:SC}, and let us focus on the $(1,2)$ edge. For this example then $W_{21} = \{ 2, 4, 5 \}$, $W_{12} = \{ 1, 3 \}$, $V_{21} = \{ 1, 3, 4, 6, 7, 8 \}$, $V_{12} = \{ 1, 2, 4, 5 \}$, $\Phi_{21} = \{ 1, 4, 5, 6 \}$ and $\Phi_{12} = \{ 2, 3 \}$. With the notation defined, we will now express the messages that are exchanged among neighboring agents. Particularly, let $i$ and $j$ be two neighboring agents, then the message sent from agent $i$ to agent $j$, $m_{ij}$, is given by
\begin{align}\label{eq:mij}
m_{ij}(x_{_{S_{ij}}}) = \minimum_{x_{_{C_i \setminus S_{ij}}}} \left\{  \sum_{k \in \phi_i} \bar F_k(x_{_{J_k}}) +  \sum_{k \in \Ne(i)\setminus \{j\}} m_{ki}(x_{_{S_{ik}}}) \right\},
\end{align}
where $S_{ij} = C_i \cap C_j$ is the so-called separator set of agents $i$ and $j$. As a result, for agent $i$ to be able to send the correct message to agent $j$ it needs to wait until it has received all the messages from its neighboring agents other than $j$. Hence, the information required for computing a message also sets the communication protocol for this algorithm. Specifically, it sets the ordering of agents in the message-passing procedure in the algorithm, where messages can only be initiated  from the leaves of the clique tree and upwards to the root of the tree, which is referred to as an upward pass through the tree. For instance, for the problem in \eqref{eq:example} and as can be seen in Figure~\ref{fig:SC}, $\Ne(2) = \{ 1, 4, 5 \}$. Then the message to be sent from Agent 2 to Agent 1 can be written as
\begin{align}\label{eq:mijexample}
m_{21}(x_1, x_4) = \minimum_{x_3} \left\{  \bar F_1(x_1, x_3) + \bar F_4(x_3, x_4) +  m_{42}(x_3) + m_{52}(x_3) \right\}.
\end{align}
which can only be computed if Agent 2 has received the messages from agents 4 and 5.

The message, $m_{ij}$, that every agent $j$ receives from a neighboring agent $i$ in fact summarizes all the necessary information that agent $j$ needs from all the agents on the $i$-side of the edge $(i, j)$. Particularly this message provides the optimal value of
\begin{align*}
\sum_{t \in \Phi_{ij}}\bar F_t(x_{_{J_t}})
\end{align*}
as a function of the variables that agents $i$ and $j$ share, i.e., $x_{_{S_{ij}}}$. This is shown in the following theorem.
\begin{theorem}\label{thm:thm2}
Consider the message sent from agent $i$ to agent $j$ as defined in \eqref{eq:mij}. This message can also be equivalently rewritten as
\begin{align}\label{eq:thm2}
m_{ij} (x_{_{S_{ij}}}) = \minimum_{x_{_{V_{ij} \setminus S_{ij} }}} \left\{\sum_{t \in \Phi_{ij}}\bar F_t(x_{_{J_t}}) \right\}
\end{align}
\end{theorem}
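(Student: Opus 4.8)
The plan is to prove the equivalence of \eqref{eq:mij} and \eqref{eq:thm2} by induction on the depth of the subtree hanging off the edge $(i,j)$ on the $i$-side, i.e.\ on the number of cliques $|W_{ij}|$. The base case is when $i$ is a leaf of the clique tree, so $W_{ij} = \{i\}$, $V_{ij} = C_i$, and $\Phi_{ij} = \phi_i$; then \eqref{eq:mij} has no incoming messages in its sum and reads $m_{ij}(x_{S_{ij}}) = \min_{x_{C_i\setminus S_{ij}}} \sum_{k\in\phi_i}\bar F_k(x_{J_k})$, which is exactly \eqref{eq:thm2} since $V_{ij}\setminus S_{ij} = C_i\setminus S_{ij}$. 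For the inductive step, I would assume \eqref{eq:thm2} holds for every message $m_{ki}$ with $k \in \Ne(i)\setminus\{j\}$, substitute these expressions into \eqref{eq:mij}, and then merge the nested minimizations into a single minimization, carefully tracking which variables are being minimized over and checking that the index sets combine correctly to give $V_{ij}\setminus S_{ij}$ and $\Phi_{ij}$.

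First I would write out the substitution explicitly: by the induction hypothesis,
\begin{align*}
m_{ij}(x_{S_{ij}}) = \minimum_{x_{C_i\setminus S_{ij}}}\left\{ \sum_{k\in\phi_i}\bar F_k(x_{J_k}) + \sum_{k\in\Ne(i)\setminus\{j\}} \minimum_{x_{V_{ki}\setminus S_{ik}}}\Big\{\sum_{t\in\Phi_{ki}}\bar F_t(x_{J_t})\Big\}\right\}.
\end{align*}
The key structural facts I need are: (i) the index sets $W_{ki}$ for distinct neighbors $k$ of $i$ (excluding $j$) are pairwise disjoint and disjoint from $\{i\}$, and together with $\{i\}$ they partition $W_{ij}$; hence $\Phi_{ij} = \phi_i \cup \bigcup_{k} \Phi_{ki}$ as a disjoint union, so the objective terms add up correctly; (ii) the separator $S_{ik} = C_i\cap C_k$ satisfies $S_{ik}\subseteq C_i$, and by the clique intersection property any variable shared between the $k$-side and the rest lies in $S_{ik}$, so $V_{ki}\cap C_i = S_{ik}$ and more generally the sets $V_{ki}\setminus S_{ik}$ for distinct $k$, together with $C_i\setminus S_{ij}$, are pairwise disjoint and their union is $V_{ij}\setminus S_{ij}$. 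Given (i) and (ii), the inner minimizations over disjoint blocks of variables can be pulled out and fused with the outer minimization over $x_{C_i\setminus S_{ij}}$ into one joint minimization over $x_{V_{ij}\setminus S_{ij}}$, because minimizing a sum of functions over disjoint groups of variables equals the sum of the separate minima, and no $\bar F_t$ with $t\in\Phi_{ki}$ depends on variables outside $V_{ki}$ while no term outside that block depends on $x_{V_{ki}\setminus S_{ik}}$. This yields precisely \eqref{eq:thm2}.

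I expect the main obstacle to be establishing fact (ii) rigorously — that the variable blocks being minimized over are genuinely disjoint and exhaust $V_{ij}\setminus S_{ij}$. This is where the clique intersection (running intersection) property of the clique tree is essential: if a variable index $v$ appeared both in some clique in $W_{ki}$ and in some clique in $W_{ij}\setminus W_{ki}$, then $v$ must lie in every clique on the path between them, in particular in $C_i$ and $C_k$, hence in $S_{ik}$, so it is not in $V_{ki}\setminus S_{ik}$. Conversely any $v \in V_{ij}\setminus S_{ij}$ lies in some clique of $W_{ij}$, which is either $C_i$ itself (and then $v\in C_i\setminus S_{ij}$) or lies in exactly one $W_{ki}$; and if $v\in V_{ki}$ also satisfied $v\in S_{ik}\subseteq C_i$, one checks whether it could still be in $S_{ij}$, which would require $v\in C_j$, again contradicting membership on a strict $i$-side. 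Handling these path-based arguments cleanly, and making sure the decoupling-of-minimization step is stated with the correct domain bookkeeping (so that the outer $\min_{x_{C_i\setminus S_{ij}}}$ and the fused inner minimizations really cover $x_{V_{ij}\setminus S_{ij}}$ without overlap or omission), is the delicate part; the rest is routine rearrangement. I would also note that since the $\bar F_t$ are convex and the feasible sets here are all of $\mathbb{R}^{|J_t|}$, all the minima are well-defined as extended-real-valued functions and the interchange of minimizations is unconditionally valid.
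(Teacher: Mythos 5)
Your proposal is correct and follows essentially the same route as the paper's proof in Appendix~\ref{app:app3}: induction from the leaves, with the key set identities (your facts (i) and (ii)) corresponding exactly to the paper's relations \eqref{eq:var}--\eqref{eq:var2} and its observation that the $\Phi_{ki}$ are disjoint because each objective term is assigned to exactly one agent. Your sketch of how the running intersection property forces a shared variable into the separator $S_{ik}$ is the right justification for the disjointness claims that the paper states without detailed proof.
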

\begin{proof}
See \cite[Thm. 10.3]{kol:09} or Appendix \ref{app:app3}.
\end{proof}
With this description of messages and at the end of an upward-pass through the clique tree, the agent at the root of the tree, indexed $r$, will have received messages from all its neighbors. Consequently, it will have all the necessary information to compute its optimal solution by solving the following optimization problem
\begin{align}\label{eq:RLocalProblem}
x^\ast_{_{C_r}} = \argmin_{x_{_{C_r}}} \left\{  \sum_{k \in \phi_r} \bar F_k(x_{_{J_k}})  + \sum_{k \in \Ne(r)} m_{kr}(x_{_{S_{rk}}}) \right\}.
\end{align}
The next theorem proves the optimality of such a solution.
\begin{theorem}\label{thm:thm3}
The equation in \eqref{eq:RLocalProblem} can be rewritten as
\begin{align}\label{eq:LocalProblem1}
x^\ast_{_{C_r}} = \argmin_{x_{_{C_r}}} \left\{  \minimum_{x_{_{\mathbb N_n \setminus C_r}}} \left\{ \bar F_1(x_{_{J_1}}) + \dots +\bar F_N(x_{_{J_N}})\right\}\right\},
\end{align}
which means that $x^\ast_{_{C_r}}$ denotes the optimal solution for elements of $x$ specified by $C_r$.
\end{theorem}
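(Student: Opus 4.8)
The plan is to start from the definition~\eqref{eq:RLocalProblem} and rewrite each incoming message using Theorem~\ref{thm:thm2}, which expresses $m_{kr}(x_{S_{rk}})$ as $\min_{x_{V_{kr}\setminus S_{rk}}}\sum_{t\in\Phi_{kr}}\bar F_t(x_{J_t})$. Substituting these into the bracketed expression in~\eqref{eq:RLocalProblem}, the objective becomes a sum of $\sum_{k\in\phi_r}\bar F_k(x_{J_k})$ together with a collection of nested minimizations, one per neighbor $k\in\Ne(r)$, each over the variables $x_{V_{kr}\setminus S_{rk}}$.

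The first key step is to pull all these separate minimizations out into a single joint minimization. This is legitimate because the index sets over which we minimize are disjoint: for distinct neighbors $k,k'$ of the root, the vertex sets $W_{kr}$ and $W_{k'r}$ on the two sides of the respective edges are disjoint (they lie in different branches hanging off $r$ in the tree $T$), hence $V_{kr}\setminus S_{rk}$ and $V_{k'r}\setminus S_{rk'}$ are disjoint sets of variable indices, and moreover each is disjoint from $C_r$ itself — any variable in $V_{kr}$ that also lies in $C_r$ must, by the clique intersection property, lie in $S_{rk}=C_r\cap C_k$ and is therefore excluded. Since a variable appearing in none of the summands being minimized does not affect a $\min$, and since the summands $\bar F_t$ for $t\in\Phi_{kr}$ depend only on variables indexed in $V_{kr}$, the nested minima can be merged into one minimization over $\bigcup_{k\in\Ne(r)}(V_{kr}\setminus S_{rk})$.

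The second key step is a counting/partition argument: the index sets $\phi_r$ and $\{\Phi_{kr}\}_{k\in\Ne(r)}$ partition $\mathbb N_N$, and the variable index sets $C_r$ and $\{V_{kr}\setminus S_{rk}\}_{k\in\Ne(r)}$ partition $\mathbb N_n$. For the objective-term side, every $F_t$ is assigned to exactly one node of the tree (the sets $\phi_k$ partition $\mathbb N_N$), and every node is either $r$ or lies in exactly one branch $W_{kr}$; hence $\phi_r$ together with the $\Phi_{kr}=\bigcup_{\ell\in W_{kr}}\phi_\ell$ cover $\mathbb N_N$ without overlap. For the variable side, $V_t=\bigcup_{k}C_k$ where the union runs over $W_{tr}$; one checks that every node of $T$ is either $r$ or in exactly one $W_{kr}$, so $\mathbb N_n=C_r\cup\bigcup_{k\in\Ne(r)}V_{kr}$, and the overlaps $V_{kr}\cap C_r=S_{rk}$ and the pairwise-disjointness established above make $C_r,\{V_{kr}\setminus S_{rk}\}$ a genuine partition. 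Therefore the merged expression reads $\min_{x_{\mathbb N_n\setminus C_r}}\{\bar F_1(x_{J_1})+\cdots+\bar F_N(x_{J_N})\}$, and taking $\argmin$ over $x_{C_r}$ yields~\eqref{eq:LocalProblem1}; the final sentence is then immediate since the right-hand side is exactly the $C_r$-block of a minimizer of the full problem~\eqref{eq:CP}.

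The main obstacle I anticipate is making the ``merge the nested minimizations into one'' step fully rigorous — specifically, carefully justifying the disjointness of the variable index sets $V_{kr}\setminus S_{rk}$ across branches and their disjointness from $C_r$ using the clique intersection property, and confirming that the $F_t$'s assigned within branch $W_{kr}$ really do depend only on variables in $V_{kr}$ (which follows from the validity condition $J_t\subseteq C_\ell$ for $t\in\phi_\ell$, $\ell\in W_{kr}$). Everything else is bookkeeping of the tree structure; this disjointness argument is where the chordality and the clique tree properties are genuinely used. Alternatively, one can obtain the result more cheaply by induction on the tree, reducing to a single application of Theorem~\ref{thm:thm2} at the root after collapsing each subtree, which may be the cleaner route and is presumably what~\cite[Thm.~10.3]{kol:09} does.
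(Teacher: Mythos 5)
Your proposal is correct and follows essentially the same route as the paper's proof in Appendix B: substitute Theorem~\ref{thm:thm2} into each message in \eqref{eq:RLocalProblem}, merge the nested minimizations using the pairwise disjointness of the sets $V_{kr}\setminus S_{rk}$ (and their disjointness from $C_r$), and use the facts that $\mathbb N_N\setminus\phi_r=\bigcup_{k\in\Ne(r)}\Phi_{kr}$ and $\mathbb N_n\setminus C_r=\bigcup_{k\in\Ne(r)}\left(V_{kr}\setminus S_{rk}\right)$ to recover the full sum. The disjointness facts you flag as the main obstacle are exactly the properties \eqref{eq:var1}--\eqref{eq:var2} the paper establishes (via the clique intersection property) in the proof of Theorem~\ref{thm:thm2} and then reuses here, so your argument matches the paper's in substance as well as in outline.
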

\begin{proof}
See \cite[Corr. 10.2, Prop. 13.1]{kol:09} or Appendix \ref{app:app4}
\end{proof}
Let us now assume that the agent at the root having computed its optimal solution $x^\ast_{_{C_r}}$, sends messages $m_{rj}(x_{_{S_{rj}}})$ and the computed optimal solution $\left(x^\ast_{_{S_{rj}}}\right)^r$ to its children, i.e., to all agents $j \in  \children(r)$. Here $\left(x^\ast_{_{S_{rj}}}\right)^r$ denotes the optimal solution computed by agent $r$. Then all these agents, similar to the agent at the root, will then have received messages from all their neighbors and can compute their corresponding optimal solution as
\begin{align}\label{eq:LocalProblemi}
x^\ast_{_{C_i}} = \argmin_{x_{_{C_i}}} \left\{  \sum_{k \in \phi_i} \bar F_k(x_{_{J_k}})  + \sum_{k \in \Ne(i)} m_{ki}(x_{_{S_{ik}}}) + \frac{1}{2} \left\| x_{_{S_{ri}}} - \left( x_{_{S_{ri}}}^\ast \right)^r \right\|^2 \right\}.
\end{align}
Notice that since $x^\ast_{_{C_r}}$ is optimal, the additional regularization term in \eqref{eq:LocalProblemi} will not affect the optimality of the solution. All it does is to assure that the computed optimal solution by the agent is consistent with that of the root. This observation also allows us to rewrite \eqref{eq:LocalProblemi} as

\small
\begin{align}\label{eq:LocalProblemiMod}
x^\ast_{_{C_i}} &= \argmin_{x_{_{C_i}}} \left\{  \sum_{k \in \phi_i} \bar F_k(x_{_{J_k}})  + \sum_{k \in \Ne(i)\setminus r} m_{ki}(x_{_{S_{ik}}}) + m_{ri}\left(\left( x_{_{S_{ri}}}^\ast \right)^r\right) + \frac{1}{2} \left\| x_{_{S_{ri}}} - \left( x_{_{S_{ri}}}^\ast \right)^r \right\|^2 \right\}\notag\\
&= \argmin_{x_{_{C_i}}} \left\{  \sum_{k \in \phi_i} \bar F_k(x_{_{J_k}})  + \sum_{k \in \Ne(i)\setminus r} m_{ki}(x_{_{S_{ik}}}) + \frac{1}{2} \left\| x_{_{S_{ri}}} - \left( x_{_{S_{ri}}}^\ast \right)^r \right\|^2 \right\}.
\end{align}
\normalsize
This means that the root does not need to compute nor send the message $m_{rj}(x_{_{S_{rj}}})$ to its neighbors and it suffices to only communicate its computed optimal solution. The same procedure is executed downward through the tree until we reach the leaves, where each agent $i$, having received the computed optimal solution by its parent, i.e., $\left( x_{_{S_{\parent(i)i}}}^\ast \right)^{\parent(i)}$, computes its optimal solution by
\begin{align}\label{eq:LocalProblempar}
x^\ast_{_{C_i}} = \argmin_{x_{_{C_i}}} \left\{  \sum_{k \in \phi_i} \bar F_k(x_{_{J_k}})   + \sum_{k \in \Ne(i)\setminus \parent(i)} m_{ki}(x_{_{S_{ik}}}) + \frac{1}{2} \left\| x_{_{S_{\parent(i)i}}} - \left( x_{_{S_{\parent(i)i}}}^\ast \right)^{\parent(i)} \right\|^2 \right\}.
\end{align}
where $\parent(i)$ denotes the index for the parent of agent $i$. As a result by the end of one upward-downward pass through the clique tree, all agents have computed their corresponding optimal solutions, and hence, at this point, the algorithm can be terminated. Furthermore, with this way of computing the optimal solution, it is always assured that the solutions computed by parents and the children  are consistent with one another. Since this is the case for all the nodes in the clique tree, it follows that we have consensus over the network. A summary of this distributed approach is given in Algorithm~\ref{alg:MP}.

\begin{algorithm}[H]
\caption{Distributed Optimization Using Message Passing}\label{alg:MP}
\begin{algorithmic}[1]
\small
\State{Given sparsity graph $G_s$ of an optimization problem}
\State{Compute a chordal embedding of $G_s$, its cliques and a clique tree over the cliques.}
\State{Assign each term of the objective function to one and only one of the agents.}
\State{Perform message passing upwards from the leaves to the root of the tree.}
\State{Perform a downward pass from the root to the leaves of the tree, where each agent, having received information about the optimal solution of its parent, computes its optimal solution using \eqref{eq:LocalProblempar} and communicates it to its children.}
\State By the end of the downward pass all agents have computed their optimal solutions and the algorithm is  terminated.
\end{algorithmic}
\end{algorithm}
\begin{rem}\label{rem:rem1}
\emph{Notice that in case the optimal solution of \eqref{eq:CP} is unique, then we can drop the regularization term in \eqref{eq:LocalProblempar} since the computed optimal solutions by the agents will be consistent due to the uniqueness of the optimal solution.}
\end{rem}
So far we have provided a distributed algorithm to compute a consistent optimal solution  for convex optimization problems in the form \eqref{eq:CP}. However, this algorithm relies on the fact that we are able to eliminate variables and compute the optimal objective value as a function of the remaining ones in closed form. This capability is essential, particularly for computing the exchanged messages among agents and in turn limits the scope of problems that can be solved using this algorithm. We will later show how the described algorithm can be incorporated within a primal-dual interior-point method to solve general convex optimization problems, distributedly.
\begin{rem}\label{rem:rem2}
\emph{ The message-passing scheme presented in this section is in fact a recursive algorithm and it terminates within a finite number of steps or after an upward-downward pass. Let us define, $L$, the height of a tree as the maximum number of edges in a path from the root to a leaf. This number then tells us how many steps it will take to perform the upward-downward pass through the tree. As a result, the shorter the tree the fewer the number of steps we need to take to complete a pass through the tree and compute the solution. Due to this fact, and since given a tree we can choose any node to be the root, having computed the clique tree we can improve the convergence properties of our algorithm by choosing a node as the root that gives us the minimum height.}
\end{rem}
\subsection{Modifying the Generation of the Computational Graph}
\begin{figure}[t]
\begin{center}
\includegraphics[width=8cm]{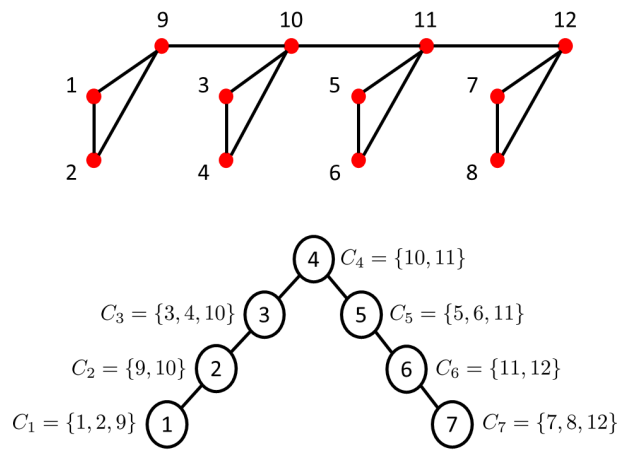}    
\caption{\small A sparsity graph and its corresponding clique tree for the problem in \eqref{eq:example1}.\normalsize }
\label{fig:ex1}
\end{center}
\end{figure}
As was discussed above, the clique tree of the sparsity graph of a coupled problem, defines the computational graph for the distributed algorithm that solves it. Given the sparsity graph for the problem, one of the ways for computing a chordal embedding and a clique tree for this graph is through the use of algorithms~\ref{alg:embed} and \ref{alg:span}. Particularly, using these algorithms allows one to automate the procedure for producing a clique tree for any given sparsity graph, with possibly different outcomes depending on the choice of algorithms. However, it is important to note that sometimes manually adding edges to the sparsity graph or its chordal embedding can enable us to shape the clique tree to our benefit and produce more suitable distributed solutions. In this case, though, extra care must be taken. For instance, it is important to assure that the modified sparsity graph is still a reasonable representation of the coupling in the problem and that the generated tree satisfies the clique intersection property, and is in fact a clique tree, as this property has been essential in the proof of the theorems presented in this section. We illustrate this using an example. Consider the following coupled optimization problem
\begin{subequations}\label{eq:example1}
\begin{align}
\minimize & \quad f_1(x_1,x_2) + f_2(x_3,x_4) + f_3(x_5,x_6) + f_4(x_7,x_8) \\
\subject & \quad g_1(x_1, x_2, x_9) \leq 0\label{eq:example1b}\\
& \quad  g_2(x_3, x_4, x_{10}) \leq 0 \\
& \quad  g_3(x_5, x_6, x_{11}) \leq 0 \\
& \quad  g_4(x_7, x_8, x_{12}) \leq 0 \\
& \quad g_5 (x_{10}, x_{11}) \leq 0\\
& \quad  x_9 - x_{10} = 0\\
& \quad x_{11} - x_{12} = 0.\label{eq:example1h}
\end{align}
\end{subequations}
This problem can be equivalently rewritten as
\begin{multline*}
\minimize \quad f_1(x_1,x_2) + \mathcal I_{\mathcal C_1}(x_1, x_2, x_9) + f_2(x_3,x_4) + \mathcal I_{\mathcal C_2}(x_3, x_4, x_{10}) +\\ f_3(x_5,x_6) + \mathcal I_{\mathcal C_3}(x_5, x_6, x_{11}) + f_4(x_7,x_8) + \mathcal I_{\mathcal C_4}(x_7, x_8, x_{12}) +\\ \mathcal I_{\mathcal C_5}(x_{10}, x_{11}) + \mathcal I_{\mathcal C_6}(x_9, x_{10}) + \mathcal I_{\mathcal C_7}(x_{11}, x_{12}),
\end{multline*}
where $\mathcal I_{\mathcal C_i}$ for $i = 1, \dots, 7$, are the indicator functions for the constraints in \eqref{eq:example1b}-- \eqref{eq:example1h}, respectively, defined as
\begin{align*}
\mathcal I_{\mathcal C_i}(x) = \begin{cases} 0 \hspace{8mm} x \in \mathcal C_i \\ \infty \hspace{6mm} \text{Otherwise} \end{cases}.
\end{align*}
This problem is in the same format as \eqref{eq:CPS}. Let us assume that we intend to produce a distributed algorithm for solving this problem using message-passing that would take full advantage of parallel computations. Without using any intuition regarding the problem and/or incorporating any particular preference regarding the resulting distributed algorithm, we can produce the chordal sparsity graph for this problem as depicted in the top graph of Figure \ref{fig:ex1}. A clique tree for this sparsity graph can be computed using algorithms \ref{alg:embed} and \ref{alg:span}, which is illustrated in the bottom plot of Figure \ref{fig:ex1}. A distributed algorithm based on this computational graph does not take full advantage of parallel computations.
\begin{figure}[t]
\begin{center}
\includegraphics[width=7.5cm]{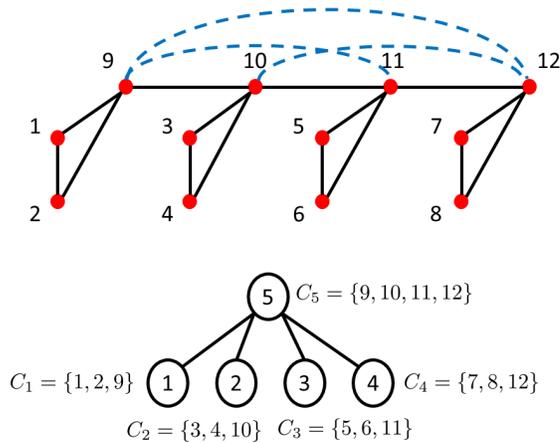}    
\caption{\small An alternative sparsity graph and its corresponding clique tree for the problem in \eqref{eq:example1}.\normalsize }
\label{fig:ex2}
\end{center}
\end{figure}
In order to produce a distributed algorithm that better facilitates the use of parallel computations, it is possible to modify the sparsity graph of the problem as shown in Figure \ref{fig:ex2}, the top graph, where we have added additional edges, marked with dashed lines, to the graph while preserving its chordal property. Notice that by doing so, we have virtually grouped variables $x_9$--$x_{12}$, that couple the terms in the objective function and constraints, together. The corresponding clique tree for this graph is illustrated in Figure \ref{fig:ex2}, the bottom graph. Notice that due to the special structure in the clique tree, within the message-passing algorithm the computation of the messages generated from agents 1--4 can be done independently, and hence in parallel. So using this clique tree as the computational graph of the algorithm enables us to fully take advantage of parallel computations. Next we briefly describe a primal-dual interior-point method for solving convex optimization problems, and then we investigate the possibility of devising distributed algorithms based on these methods for solving loosely coupled problems.

\section{Primal-dual Interior-point Method}\label{sec:PDIPM}
Consider the following convex optimization problem
\begin{equation}\label{eq:ConvexIneq}
\begin{split}
\minimize & \quad  F(x)\\
\subject & \quad  g_i(x) \leq 0, \quad i = 1, \dots, m, \\
&\quad  Ax = b,
\end{split}
\end{equation}
where $F \colon \mathbb R^n \rightarrow \mathbb R$, $g_i \colon \mathbb R^n \rightarrow \mathbb R$ and $A \in \mathbb R^{p \times n}$ with $p<n$ and $\rank (A) = p$. Under the assumption that we have constraint qualification, e.g., that there exist a strictly feasible point, then $x^*$, $v^*$ and $\lambda^*$ constitute a primal-dual optimal solution for \eqref{eq:ConvexIneq} if and only if they satisfy the KKT optimality conditions for this problem, given as
\begin{subequations}\label{eq:ConvexIneqKKTP1}
\begin{align}
\nabla F(x)  + \sum_{i = 1}^m \lambda_i\nabla g_i(x) + A^T v&= 0,\\ \lambda_i&\geq 0, \quad i= 1, \dots, m, \\ g_i(x) &\leq 0, \quad i= 1, \dots, m,\\ -\lambda_i g_i(x) &= 0, \quad i= 1, \dots, m,  \label{eq:ConvexIneqKKTP-d}\\  A x &= b.
\end{align}
\end{subequations}
A primal-dual interior-point method computes such a solution by iteratively solving linearized perturbed versions of \eqref{eq:ConvexIneqKKTP1} where \eqref{eq:ConvexIneqKKTP-d} is modified as
\begin{align*}
-\lambda_i g_i(x) = 1/t, \quad i= 1, \dots, m,
\end{align*}
with $t>0$, \cite{wri:97,boyd:04}. Particularly, for this framework, at each iteration $l$ given primal and dual iterates $x^{(l)}$, $\lambda^{(l)}$ and $v^{(l)}$ so that $g_i(x^{(l)}) < 0$ and $\lambda_i^{(l)} > 0$ for all $i = 1, \dots, m$, the next update direction is computed by solving the linearization of
\begin{subequations}\label{eq:ConvexIneqKKTPD}
\begin{align}
\nabla F(x)  + \sum_{i = 1}^m \lambda_i\nabla g_i(x) + A^T v&= 0,\\  -\lambda_i g_i(x) &= 1/t ,\ i= 1, \dots, m, \label{eq:ConvexIneqKKTPDb}\\  A x &= b.
\end{align}
\end{subequations}
at the current iterates, given as
\begin{subequations}\label{eq:QAppIneqKKTPD}
\begin{align}
\begin{split}
\!\!\!\!\left(\!\nabla^2 F(x^{(l)}) + \sum_{i = 1}^m \lambda_i^{(l)}\nabla^2 g_i(x^{(l)})\right)\! \Delta x+ & \\
 \sum_{i = 1}^m \nabla g_i(x^{(l)})\Delta \lambda_i + A^T\Delta v &= -r^{(l)}_{\text{dual}},\end{split}\\
\begin{split}
 -\lambda_i^{(l)} \nabla g_i(x^{(l)})^T \Delta x - g_i (x^{(l)}) \Delta \lambda_i &= -\left(r^{(l)}_{\text{cent}}\right)_i,\\
  i= 1, \dots, m,&\end{split} \\
 A \Delta x &= -r^{(l)}_{\text{primal}},
\end{align}
\end{subequations}
where
\begin{subequations}\label{eq:res}
\begin{align}
r_{\text{dual}}^{(l)} &= \nabla F(x^{(l)}) +  \sum_{i = 1}^m \lambda^{(l)}_i\nabla g_i(x^{(l)}) + A^T v^{(l)},\\
\left(r_{\text{cent}}^{(l)} \right)_i &= -\lambda^{(l)}_i g_i(x^{(l)})-1/t, \quad i=1, \dots, m,\\
r_{\text{primal}}^{(l)}& = Ax^{(l)} - b.
\end{align}
\end{subequations}
Define $G_d^{(l)} = \diag (g_1(x^{(l)}), \dots, g_1(x^{(l)}))$, $Dg(x) = \begin{bmatrix} \nabla g_1(x) & \dots & \nabla g_m(x) \end{bmatrix}^T$,
\begin{align*}
H_{\text{pd}}^{(l)} = \nabla^2 F(x^{(l)}) + \sum_{i = 1}^m \lambda_i^{(l)}\nabla^2 g_i(x^{(l)})- \sum_{i=1}^m \frac{\lambda_i^{(l)}}{g_i(x^{(l)})} \nabla g_i(x^{(l)})\nabla g_i(x^{(l)})^T,
\end{align*}
and $r^{(l)} = r^{(l)}_{\text{dual}} + Dg(x^{(l)}) ^TG_{\textrm{d}}^{-1} r^{(l)}_{\text{cent}}$. By eliminating $\Delta \lambda$ as
\begin{align}\label{eq:PDLambda}
\Delta \lambda = -G_{\textrm{d}}(x^{(l)})^{-1}\left( \diag(\lambda^{(l)}) Dg(x^{(l)}) \Delta x -   r^{(l)}_{\text{cent}} \right),
\end{align}
we can rewrite \eqref{eq:QAppIneqKKTPD} as
\begin{align}\label{eq:PD}
\begin{bmatrix} H^{(l)}_{\text{pd}} & A^T \\ A & 0 \end{bmatrix}\begin{bmatrix} \Delta x \\ \Delta v \end{bmatrix}  = - \begin{bmatrix}r^{(l)}\\ r^{(l)}_{\text{primal}}     \end{bmatrix},
\end{align}
which has a lower dimension than \eqref{eq:QAppIneqKKTPD}, and unlike the system of equations in \eqref{eq:QAppIneqKKTPD}, is symmetric. This system of equations is sometimes referred to as the augmented system. It is also possible to further eliminate $\Delta x$ in \eqref{eq:PD} and then solve the so-called normal equations for computing $\Delta v$. However, this commonly destroys the inherent structure in the problem, and hence we abstain from performing any further elimination of variables. The system of equations in \eqref{eq:PD} also expresses the optimality conditions for the following quadratic program
\begin{align}\label{eq:PDQP}
\begin{split}
\minimize & \quad \frac{1}{2}\Delta x^T H^{(l)}_{pd} \Delta x + (r^{(l)})^T \Delta x\\
\subject & \quad A \Delta x = - r_{\text{primal}}^{(l)},
\end{split}
\end{align}
and hence, we can compute $\Delta x$ and $\Delta v$ also by solving~\eqref{eq:PDQP}. Having computed $\Delta x$ and $\Delta v$, $\Delta \lambda$ can then be computed using \eqref{eq:PDLambda}, which then allows us to update the iterates along the computed directions. A layout for a primal-dual interior-point is given in Algorithm \ref{alg:PD}.
\begin{algorithm}[tb]
\caption{Primal-dual Interior-point Method, \cite[]{boyd:04}}\label{alg:PD}
\begin{algorithmic}[1]
\small
\State{Given $l = 0$, $\mu>1$, $\epsilon>0$, $\epsilon_{\text{feas}}>0$, $\lambda^{(0)} > 0$, $v^{(0)}$, $x^{(0)}$ such that $g_i(x^{(0)}) < 0$ for all $i = 1, \dots, m$ and $\hat \eta^{(0)} = \sum_{i=1}^m -\lambda_i^{(0)}g_i(x^{(0)})$}
\Repeat
\State{$t = \mu m/\hat \eta^{(l)}$}
\State{Given $t$, $\lambda^{(l)}$, $v^{(l)}$ and $x^{(l)}$ compute $\Delta x^{(l+1)}$, $\Delta \lambda^{(l+1)}$, $\Delta v^{(l+1)}$ by solving \eqref{eq:PD} and~\eqref{eq:PDLambda}}
\State{Compute $\alpha^{(l+1)}$ using line search}
\State  $x^{(l+1)} =  x^{(l)} + \alpha^{(l+1)}\Delta x^{(l+1)}$
\State  $\lambda^{(l+1)} =  \lambda^{(l)} + \alpha^{(l+1)}\Delta \lambda^{(l+1)}$
\State  $v^{(l+1)} =  v^{(l)} + \alpha^{(l+1)}\Delta v^{(l+1)}$
\State {$l = l + 1$}
\State{$\hat \eta^{(l)} = \sum_{i=1}^m -\lambda_i^{(l)}g_i(x^{(l)})$}
\Until{$\| r^{(l)}_{\text{primal}} \|^2, \| r^{(l)}_{\text{dual}} \|^2 \leq \epsilon_{\text{feas}}$ and $\hat \eta^{(l)} \leq \epsilon$}
\normalsize
\end{algorithmic}
\end{algorithm}
\begin{rem}\label{rem:nonsingular}
\emph{Notice that in order for the computed directions to constitute a suitable search direction, the coefficient matrix in \eqref{eq:PD} needs to be nonsingular. There are different assumptions that guarantee such property, e.g., that $\mathcal N(H_{pd}^{(l)}) \cap \mathcal N(A) = \{ 0 \}$, \emph{\cite{boyd:04}}. So, we assume that the problems we consider satisfy this property.}
\end{rem}
There are different approaches for computing proper step sizes in the 5th step of the algorithm. One of such approaches ensures that $g_i(x^{(l+1)})<0$ for $i = 1, \dots, m$ and $\lambda^{(l+1)}\succ 0$, by first setting
\begin{align*}
\alpha_{\textrm{max}} = \minimum \left\{ 1, \minimum \left\{ -\lambda_i^{(l)}/\Delta \lambda_i^{(l+1)}  \ \big | \ \Delta \lambda_i^{(l+1)} < 0  \right\} \right\},
\end{align*}
and conducting a backtracking line search as below\\
\begin{algorithmic}
  \While{$\exists \ i  \colon g_i (x^{(l)} + \alpha^{(l+1)} \Delta x^{(l+1)}) > 0 $}
    \State $\alpha^{(l+1)} = \beta \alpha^{(l+1)}$
  \EndWhile
\end{algorithmic}
with $\beta \in (0,1)$ and $\alpha^{(l+1)}$ initialized as $0.99 \alpha_{\textrm{max}}$. Moreover, in order to ensure steady decrease of the primal and dual residuals, the back tracking is continued as
\begin{flushleft}
\begin{algorithmic}
 \While{$\left\| \left(r_{\text{primal}}^{(l+1)}, r_{\text{dual}}^{(l+1)}\right) \right\| > (1 - \gamma \alpha^{(l+1)})  \left\| \left(r_{\text{primal}}^{(l)}, r_{\text{dual}}^{(l)}\right) \right\| $}
    \State  $\alpha^{(l+1)} = \beta \alpha^{(l+1)}$
  \EndWhile
\end{algorithmic}
\end{flushleft}
where $\gamma \in [0.01, 0.1]$. The resulting $\alpha^{(l+1)}$ ensures that the primal and dual iterates remain feasible at each iteration and that the primal and dual residuals will converge to zero, \cite{wri:97,boyd:04}.
\begin{rem}\label{rem:infeas}
\emph{The primal-dual interior-point method presented in Algorithm \ref{alg:PD}, is an infeasible long step variant of such methods, \emph{\cite{wri:97}}. There are other alternative implementations of primal-dual methods that particularly differ in their choice of search directions, namely short-step, predictor-corrector and Mehrotra's predictor-corrector. The main difference between the distinct primal-dual directions, commonly arise due to different approaches for perturbing the KKT conditions, specially through the choice of $t$, \emph{\cite{wri:97}}. This means that for the linear system of equations in \eqref{eq:ConvexIneqKKTPD}, only the right hand side of the equations will be different and hence the structure of the coefficient matrix in \eqref{eq:PD} remains the same for all the aforementioned variants. Consequently, all the upcoming discussions will be valid for other such variants.}
\end{rem}
Next we provide a formal description of loosely coupled problems and will show how we can devise a distributed primal-dual interior-point method for solving these problems using message-passing.

\section{A Distributed Primal-dual Interior-point Method}\label{sec:DPDIPM}

In this section we put forth a distributed primal-dual interior-point method for solving loosely coupled problems. Particularly, we first provide a formal description for loosely coupled problems and then give details on how to compute the primal-dual directions and proper step sizes, and how to decide on terminating the algorithm distributedly.

\subsection{Loosely Coupled Optimization Problems}\label{sec:loose}

Consider the convex optimization problem in \eqref{eq:EOP}. We can provide mathematical and graphical descriptions of the coupling structure in this problem, as in Section \ref{sec:CPP}. The only difference is that the coupling structure will in this case concern the triplets $f_i, G^i$ and $A^i$ instead of single functions $F_i$. Similar to \eqref{eq:CPS} we can reformulate~\eqref{eq:EOP} as
\begin{subequations}\label{eq:DDEOPS}
\begin{align}
\minimize_{x} \quad & \bar f_1(E_{J_1}x) + \dots + \bar f_N( E_{J_N}x),\\
\subject \quad & \bar G^i(E_{J_i}x) \preceq 0,  \quad  i = 1, \dots, N,\\
  & \bar A^i E_{J_i}x = b^i, \hspace{6mm} i=1, \dots, N, \label{eq:DEOPSc}
\end{align}
\end{subequations}
where in this formulation, the functions $\bar f_i \colon \mathbb R^{|J_i|} \rightarrow \mathbb R$ and $\bar G^i \colon \mathbb R^{|J_i|} \rightarrow \mathbb R^{m_i}$ are defined in the same manner as the functions $\bar F_i$, $\rank(\begin{bmatrix}E_{J_1}^T(\bar A^1)^T & \dots & E_{J_N}^T(\bar A^N)^T \end{bmatrix}^T) = \bar p$ with $\bar p = \sum_{i = 1}^N p_i$, and the matrices $\bar A^i \in \mathbb R^{p_i \times |J_i|}$ are defined by removing unnecessary columns from $A^i$ where $p_i < |J_i|$ and $\rank (\bar A^i) = p_i$ for all $i = 1, \dots, N$. Furthermore, we assume that the loose coupling in the problem is such that the sparsity graph of the problem is such that for all cliques in the clique tree, we have $|C_i| \ll n$ and that $|C_i \cap C_j|$ is small in comparison to the cliques sizes.


From now on let us assume that the chordal sparsity graph of the problem in \eqref{eq:DDEOPS} has $q$ cliques and that $T(V_t, \mathcal E_t)$ defines its corresponding clique tree. Using the guidelines discussed in Section \ref{sec:OMP}, we can then assign different subproblems that build up \eqref{eq:DDEOPS} to each node or agent in the tree. As we will show later, our proposed distributed primal-dual method utilizes this clique tree as its computational graph. Before we go further and in order to make the description of the messages and the message-passing procedure simpler let us group the equality constraints assigned to each agent $j$ as
\begin{align}\label{eq:AgentEq}
\mathcal A^j x = \mathbf b^j
\end{align}
where
\begin{subequations}\label{eq:LocalEquality}
\begin{align}
\mathcal A^j &= \begin{bmatrix} \bar A^{i_1} E_{J_{i_1}} \\ \vdots \\ \bar A^{i_{m_j}} E_{J_{i_{m_j}}}\end{bmatrix},\\
\mathbf b^j &= (b^{i_1}, \dots, b^{i_{m_j}})و
\end{align}
\end{subequations}
for $j = 1, \dots, q$, where $\phi_j = \{ i_1, \dots, i_{m_j}  \}$. We can then rewrite the problem in \eqref{eq:DDEOPS} as
\begin{subequations}\label{eq:DEOPSEq1}
\begin{align}
\minimize \quad &  \bar f_1(E_{J_1}x) + \dots + \bar f_N( E_{J_N}x),\\
\subject \quad & \bar G^i(E_{J_i}x) \preceq 0,  \quad  i = 1, \dots, N,\\
& \mathbf A^i E_{C_i} x = \mathbf b^i, \quad i \in \mathbb N_q \label{eq:DEOPSEq1c}
\end{align}
\end{subequations}
where the coefficient matrices $\mathbf A^i$ are obtained by permuting the columns of the matrices $\mathcal A^i$. Next we solve \eqref{eq:DDEOPS} by applying the primal-dual method in Algorithm \ref{alg:PD} to \eqref{eq:DEOPSEq1} and will discuss how it can be done distributedly within a primal-dual framework. The computational burden of each iteration of a primal-dual interior-point method is dominated by primal-dual directions computation. We hence start by describing a distributed algorithm for calculating these directions using message-passing.

\subsection{Distributed Computation of Primal-dual Directions}\label{sec:DCPDD}

Computing the primal-dual directions requires solving the linear system of equations in~\eqref{eq:PD} where for the problem in \eqref{eq:DEOPSEq1}
\begin{align}\label{eq:QP1}
H_{\text{pd}}^{(l)} = \sum_{i = 1}^{q} \sum_{k \in \phi_i} E_{J_k}^T H_{\text{pd}}^{k,(l)}E_{J_k},
\end{align}
with
\begin{align}\label{eq:QP2}
H_{\text{pd}}^{i,(l)} = \nabla^2 \bar f_i(x_{_{J_i}}^{(l)}) + \sum_{j = 1}^{m_i} \lambda^{i,(l)}_j \nabla^2 \bar G_j^i(x_{_{J_i}}^{(l)})-  \sum_{j=1}^{m_i} \frac{\lambda^{i,(l)}_j}{\bar G_j^i(x_{_{J_i}}^{(l)})}\nabla \bar G_j^i(x_{_{J_i}}^{(l)})\left(\nabla \bar G_j^i(x_{_{J_i}}^{(l)})\right)^T,
\end{align}
$A = \blkdiag\left(\mathbf A^1, \dots, \mathbf A^q\right) \bar E$ with $\bar{E}  = \begin{bmatrix} E_{C_1}^T  &\cdots &  E_{C_q}^T \end{bmatrix}^T$, $r^{(l)} = \bar E^T (r^{1,(l)}, \dots, r^{q,(l)})$ where
\begin{multline*}
r^{i,(l)} =  \sum_{k \in \phi_i}\Big \{\nabla \bar f_k (x_{_{J_k}}^{(l)}) + \sum_{j=1}^{m_k} \lambda^{k,(l)}_j \nabla \bar G_j^k(x_{_{J_k}}^{(l)}) +  \\  D\bar G^k(x_{_{J_k}}^{(l)}) \diag\left(\bar G^k(x_{_{J_k}}^{(l)})\right)^{-1} r_{\text{cent}}^{k,(l)} \Big \} + (\mathbf A^i)^T v^{i,(l)}  ,
\end{multline*}
with
\begin{align*}
r^{k,(l)}_{\text{cent}} = - \diag(\lambda^{k,(l)}) \bar G^k(x_{_{J_k}}^{(l)}) -\frac{1}{t} \mathbf{1},
\end{align*}
and $r_{\text{primal}}^{(l)} = (r_{\text{primal}}^{1,(l)}, \dots, r_{\text{primal}}^{q,(l)})$ with
\begin{align}\label{eq:primalRes}
r_{\text{primal}}^{i,(l)} = & \mathbf A^i x_{_{C_i}}^{(l)} - \mathbf b^i.
\end{align}
The key for devising a distributed algorithm based on a primal-dual interior-point method, is to exploit the structure in this linear system of equations that also expresses the optimality conditions for the following quadratic program
\begin{subequations}\label{eq:QAppEqLogPD}
\begin{align}
\minimize & \quad \sum_{i = 1}^{q} \frac{1}{2}\Delta x^T \left( \sum_{k \in \phi_i} E_{J_k}^T H_{\text{pd}}^{k,(l)}E_{J_k} \right) \Delta x + (r^{i,(l)})^T E_{C_i}\Delta x \\
\subject & \quad \mathbf A^i E_{C_i}(\Delta x + x^{(l)}) = \mathbf b^i, \quad i = 1, \dots, q.\label{eq:QAppEqLogPD-b}
\end{align}
\end{subequations}
which can be rewritten as
\begin{subequations}\label{eq:QAppEqLogPD1}
\begin{align}
\minimize & \quad \sum_{i = 1}^{q} \frac{1}{2}\Delta x^TE_{C_i}^T \mathbf H_{\text{pd}}^{i,(l)} E_{C_i} \Delta x + (r^{i,(l)})^T E_{C_i}\Delta x \\
\subject & \quad \mathbf A^i E_{C_i}(\Delta x + x^{(l)}) = \mathbf b^i, \quad i = 1, \dots, q.\label{eq:QAppEqLogPD-b}
\end{align}
\end{subequations}
where $\mathbf H_{\text{pd}}^{i,(l)} = \sum_{k \in \phi_i} (\bar E^i_{k})^T H_{\text{pd}}^{k,(l)} \bar E^i_{k}$ with $\bar E^i_{k} = E_{J_k}E_{C_i}^T $. In order to assure that the property in Remark \ref{rem:nonsingular} also holds for the problem in \eqref{eq:QAppEqLogPD1}, we need to make assumptions regarding the subproblems assigned to each agent, which is described in the following lemma.
\begin{lemma}\label{lem:lemD}
The condition in Remark \ref{rem:nonsingular} holds for the problem in \eqref{eq:QAppEqLogPD1}, if $\mathcal N(\mathbf H_{\text{pd}}^{i,(l)}) \cap \mathcal N(\mathbf A^i) = \{ 0 \}$ for all subproblems $i \in \mathbb N_q$.
\end{lemma}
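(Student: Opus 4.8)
The plan is to show that the global condition $\mathcal N(H_{\text{pd}}^{(l)}) \cap \mathcal N(A) = \{0\}$ follows from the collection of local conditions $\mathcal N(\mathbf H_{\text{pd}}^{i,(l)}) \cap \mathcal N(\mathbf A^i) = \{0\}$, $i \in \mathbb N_q$, by exploiting the fact that the global Hessian and the global constraint matrix are assembled as sums / block stacks of the local pieces through the $0$--$1$ selection matrices $E_{C_i}$. Concretely, recall from \eqref{eq:QAppEqLogPD1} that $H_{\text{pd}}^{(l)} = \sum_{i=1}^q E_{C_i}^T \mathbf H_{\text{pd}}^{i,(l)} E_{C_i}$ and $A = \blkdiag(\mathbf A^1, \dots, \mathbf A^q)\bar E$ with $\bar E = \begin{bmatrix} E_{C_1}^T & \cdots & E_{C_q}^T\end{bmatrix}^T$, so that $Ax = (\mathbf A^1 x_{C_1}, \dots, \mathbf A^q x_{C_q})$.

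First I would take an arbitrary $\Delta x \in \mathcal N(H_{\text{pd}}^{(l)}) \cap \mathcal N(A)$ and aim to show $\Delta x = 0$. Since each $\mathbf H_{\text{pd}}^{i,(l)}$ is positive semidefinite (being a sum of a Hessian of a convex function, nonnegative combinations of Hessians of convex constraint functions, and rank-one terms $-(\lambda_j/\bar G_j)\nabla\bar G_j\nabla\bar G_j^T$ which are PSD because $\bar G_j < 0$ and $\lambda_j > 0$ strictly inside the interior-point iterations), the quadratic form $\Delta x^T H_{\text{pd}}^{(l)}\Delta x = \sum_{i=1}^q (E_{C_i}\Delta x)^T \mathbf H_{\text{pd}}^{i,(l)} (E_{C_i}\Delta x)$ is a sum of nonnegative terms. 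Hence $H_{\text{pd}}^{(l)}\Delta x = 0$ forces $\Delta x^T H_{\text{pd}}^{(l)}\Delta x = 0$, which forces each term to vanish, i.e. $E_{C_i}\Delta x \in \mathcal N(\mathbf H_{\text{pd}}^{i,(l)})$ for every $i$. Separately, $A\Delta x = 0$ means $\mathbf A^i E_{C_i}\Delta x = 0$, i.e. $E_{C_i}\Delta x \in \mathcal N(\mathbf A^i)$ for every $i$. Combining these two, $E_{C_i}\Delta x \in \mathcal N(\mathbf H_{\text{pd}}^{i,(l)}) \cap \mathcal N(\mathbf A^i) = \{0\}$ by hypothesis, so $E_{C_i}\Delta x = (\Delta x)_{C_i} = 0$ for all $i$. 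Finally, since the cliques $C_1, \dots, C_q$ of the sparsity graph cover all of $\mathbb N_n$ (every variable index lies in at least one clique), the vanishing of all the subvectors $(\Delta x)_{C_i}$ forces $\Delta x = 0$, completing the argument.

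The step I expect to require the most care is the positive-semidefiniteness claim for $H_{\text{pd}}^{(l)}$, since it is what lets me pass from "$H_{\text{pd}}^{(l)}\Delta x = 0$" to "each $E_{C_i}\Delta x \in \mathcal N(\mathbf H_{\text{pd}}^{i,(l)})$" — without PSD, the null space of a sum need not behave well. This relies on the standing interior-point invariants $\bar G_j^i(x_{J_i}^{(l)}) < 0$ and $\lambda_j^{i,(l)} > 0$ maintained by Algorithm~\ref{alg:PD}, together with convexity of $\bar f_i$ and $\bar G_j^i$; I would state this explicitly as the reason each $H_{\text{pd}}^{i,(l)} \succeq 0$ (and hence each $\mathbf H_{\text{pd}}^{i,(l)} \succeq 0$, as a congruence-type sum $\sum_k (\bar E^i_k)^T H_{\text{pd}}^{k,(l)}\bar E^i_k$). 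A minor secondary point worth noting is the covering property of the clique set, which is immediate from the construction of the chordal embedding in Algorithm~\ref{alg:embed} but deserves a one-line mention so the last implication $(\Delta x)_{C_i}=0\ \forall i \Rightarrow \Delta x = 0$ is not left dangling.
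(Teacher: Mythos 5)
Your proof is correct and follows essentially the same route as the paper's: both reduce the global null-space condition to the local ones by exploiting positive semidefiniteness of the terms $E_{C_i}^T\mathbf H_{\text{pd}}^{i,(l)}E_{C_i}$ (so that the null space of the sum is the intersection of the null spaces), apply the hypothesis cliquewise, and conclude via the fact that the cliques cover $\mathbb N_n$. Your explicit quadratic-form argument and the remark on why each $\mathbf H_{\text{pd}}^{i,(l)}\succeq 0$ merely spell out steps the paper leaves implicit.
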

\begin{proof}
The condition in Remark \ref{rem:nonsingular} is equivalent to
\begin{align}\label{eq:app12}
\mathcal N\left( \sum_{i=1}^q E_{C_i}^T \mathbf H_{\text{pd}}^{i,(l)} E_{C_i}\right) \cap \mathcal N\left( \begin{bmatrix} \mathbf A^1 E_{C_1} \\ \vdots \\ \mathbf A^q E_{C_q} \end{bmatrix}  \right) = \{ 0 \}.
\end{align}
Since $E_{C_i}^T \mathbf H_{\text{pd}}^{i,(l)} E_{C_i} \in \mathbb S_+^n$ for all $i = 1, \dots, N$, this condition can be equivalently rewritten as
\begin{align}\label{eq:app22}
\left[ \bigcap_{i=1}^q \mathcal N\left(E_{C_i}^T \mathbf H_{\text{pd}}^{i,(l)} E_{C_i}\right)   \right] \cap \left[ \bigcap_{i=1}^q \mathcal N\left(\mathbf A^i E_{C_i}\right)   \right] = \{ 0 \}.
\end{align}
By arranging the terms in \eqref{eq:app22} and using associative property of the intersection operator, we can equivalently reformulate it as
\begin{align}\label{eq:app32}
\bigcap_{i=1}^q \left[  \mathcal N\left(E_{C_i}^T \mathbf H_{\text{pd}}^{i,(l)} E_{C_i}\right)  \cap \mathcal N\left(\mathbf A^i E_{C_i}\right)   \right] = \{ 0 \}.
\end{align}
Notice that the $E_{C_i}$s are constructed such that they have full row rank. Now let $\mathcal N\left(\mathbf H_{\text{pd}}^{i,(l)}\right)  \cap \mathcal N\left(\mathbf A^i\right) = \{ 0 \}$ for all $i = 1, \dots, q$, and assume that there exists $x \neq 0$ such that
\begin{align*}
x \in \bigcap_{i=1}^q \left[  \mathcal N\left(E_{C_i}^T \mathbf H_{\text{pd}}^{i,(l)} E_{C_i}\right)  \cap \mathcal N\left(\mathbf A^i E_{C_i}\right)   \right].
\end{align*}
This then implies that for any $x_{_{C_i}} = E_{C_i}x$ it must hold that $x_{_{C_i}} \in \mathcal N\left(E_{C_i}^T\mathbf H_{\text{pd}}^{i,(l)}\right)  \cap \mathcal N\left(\mathbf A^i\right)$ for all $i = 1, \dots, q$, or equivalently $x_{_{C_i}} \in \mathcal N\left(\mathbf H_{\text{pd}}^{i,(l)}\right)  \cap \mathcal N\left(\mathbf  A^i\right)$ for all $i = 1, \dots, q$, since $E_{C_i}$s have full row rank. Under the assumption that $x \neq 0$, then for some $i\in \mathbb N_q$, $x_{_{C_i}} \neq 0$. Therefore, $x_{C_i} \in \mathcal N\left(\mathbf H_{\text{pd}}^{i,(l)}\right)  \cap \mathcal N\left(\mathbf A^i\right)$ and $x_{_{C_i}}  \neq 0$ for some $i$. This is in contradiction to the assumption that $\mathcal N\left(\mathbf H_{\text{pd}}^{i,(l)}\right)  \cap \mathcal N\left(\mathbf A^i\right) = \{ 0 \}$ for all $i = 1, \dots, q$. This completes the proof.
\end{proof}
We can rewrite \eqref{eq:QAppEqLogPD1} as the following unconstrained optimization problem
\begin{align}\label{eq:CPSQP}
\minimize_{\Delta x} \hspace{4mm} \sum_{i = 1}^{q} \underbrace{\frac{1}{2}\Delta x_{_{C_i}}^T \mathbf H_{\text{pd}}^{i,(l)}\Delta x_{_{C_i}} + (r^{i,(l)})^T \Delta x_{_{C_i}} + \mathcal I_{\mathcal T_i} (\Delta x_{_{C_i}})}_{\bar F_i (\Delta x_{_{C_i}})}
\end{align}
where $\mathcal T_i$ is the polyhedral set defined by the $i$th equality constraint in \eqref{eq:QAppEqLogPD1} and $\mathcal I_{\mathcal T_i}$ is its corresponding indicator function. The problem in \eqref{eq:CPSQP} is in the same form as \eqref{eq:CPS}. Notice that the coupling structure in this problem remains the same during the primal-dual iterations. Furthermore, the coupling structure for this problem is such that we can solve it by performing message-passing over the clique tree for the sparsity graph of~\eqref{eq:DDEOPS}. Considering the subproblem assignments discussed in Section \ref{sec:loose}, at each iteration of the primal-dual method, each agent will have the necessary information to form their corresponding quadratic subproblems and take part in the message passing framework.

Let us now focus on how the exchanged messages can be computed and what information needs to be communicated within the message passing procedure. Firstly, notice that each $\bar F_i$ describes an equality constrained quadratic program. Consequently, computing the exchanged messages for solving~\eqref{eq:CPSQP}, requires us to compute the optimal objective value of equality constrained quadratic programs parametrically as a function of certain variables. We next put forth guidelines on how this can be done efficiently. Consider the following quadratic program
\begin{align}\label{eq:QPxy}
\minimize & \quad  \frac{1}{2} \begin{bmatrix} z \\ y \end{bmatrix}^T \begin{bmatrix} Q_{zz} & Q_{zy}\\ Q_{zy}^T & Q_{yy} \end{bmatrix}\begin{bmatrix} z \\ y \end{bmatrix} + \begin{bmatrix} q_z \\ q_y \end{bmatrix}^T\begin{bmatrix} z \\ y \end{bmatrix} + c\notag\\
\subject &  \quad A_z z + A_y y = \bar b
\end{align}
where $z\in \mathbb R^{n_z}$, $y \in \mathbb R^{n_y}$, $\begin{bmatrix}A_z & A_y\end{bmatrix}\in \mathbb R^{p \times n}$ with $n = n_z + n_y$, $\rank(\begin{bmatrix}A_z & A_y\end{bmatrix}) = \rank(A_z) = p$, and that $\mathcal N(\begin{bmatrix} Q_{zz} & Q_{zy}\\ Q_{zy}^T & Q_{yy} \end{bmatrix}) \cap \mathcal N(\begin{bmatrix}A_z & A_y\end{bmatrix}) = \{0\}$. Without loss of generality assume that we intend to solve this optimization problem parametrically as a function of $y$.  This means that we want to solve the following optimization problem
\begin{align}\label{eq:QPx}
\minimize_z & \quad  \frac{1}{2} z^T Q_{zz} z + z^T (Q_{zy} y + q_z)  + \frac{1}{2} y^T Q_{yy} y  + y^Tq_y + c\notag\\
\subject &  \quad A_z z = \bar b - A_y y
\end{align}
The optimality conditions for this problem are given as
\begin{align}\label{eq:OCQP}
\underbrace{\begin{bmatrix} Q_{zz} & A_{z}^T \\ A_{z} & 0 \end{bmatrix}}_{\mathbf O} \begin{bmatrix} z \\  \bar v \end{bmatrix} = \underbrace{\begin{bmatrix} -q_z \\ \bar b \end{bmatrix} - \begin{bmatrix} Q_{zy} \\ A_{y} \end{bmatrix}y}_{h(y)}
\end{align}
Notice that for the problem in \eqref{eq:QPxy} $\mathbf O$ is nonsingular, which is shown in the following lemma.
\begin{lemma}\label{lem:lemRank}
Consider the problem in \eqref{eq:QPxy}, and assume that $\rank(\begin{bmatrix}A_z & A_y\end{bmatrix}) = \rank(A_z) = p$ and $\mathcal N(\begin{bmatrix} Q_{zz} & Q_{zy}\\ Q_{zy}^T & Q_{yy} \end{bmatrix}) \cap \mathcal N(\begin{bmatrix}A_z & A_y\end{bmatrix}) = \{0\}$. Then $\mathbf O$ is nonsingular.
\end{lemma}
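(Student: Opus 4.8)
The plan is to show that $\mathbf{O}\begin{bmatrix} z \\ \bar v \end{bmatrix} = 0$ forces $z = 0$ and $\bar v = 0$, using the two hypotheses in a way that parallels the standard KKT-matrix nonsingularity argument for the block $2\times 2$ system, but with the extra twist that the null-space condition is stated for the \emph{full} matrix $\begin{bmatrix} Q_{zz} & Q_{zy}\\ Q_{zy}^T & Q_{yy}\end{bmatrix}$ and for $\begin{bmatrix} A_z & A_y\end{bmatrix}$, not just for $Q_{zz}$ and $A_z$. First I would write out the two scalar/vector equations: $Q_{zz} z + A_z^T \bar v = 0$ and $A_z z = 0$. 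Left-multiplying the first by $z^T$ and using $A_z z = 0$ gives $z^T Q_{zz} z = 0$.

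The key step is to promote this to a statement about the big matrix. Since $Q_{zz}$ is the leading principal block of a positive semidefinite matrix (it is a Schur-type submatrix of $\begin{bmatrix} Q_{zz} & Q_{zy}\\ Q_{zy}^T & Q_{yy}\end{bmatrix} \in \mathbb{S}_+$, which holds here because $\mathbf{H}_{\text{pd}}^{i,(l)} \in \mathbb{S}_+$ in the application), $Q_{zz} \succeq 0$ and hence $z^T Q_{zz} z = 0$ implies $Q_{zz} z = 0$. Therefore $\begin{bmatrix} Q_{zz} & Q_{zy}\\ Q_{zy}^T & Q_{yy}\end{bmatrix}\begin{bmatrix} z \\ 0\end{bmatrix} = \begin{bmatrix} 0 \\ Q_{zy}^T z\end{bmatrix}$; this is not obviously zero, so I cannot yet conclude $\begin{bmatrix} z \\ 0\end{bmatrix}$ is in the null space of the big $Q$. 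I would instead observe that $\begin{bmatrix} z \\ 0 \end{bmatrix}$ lies in $\mathcal{N}(\begin{bmatrix} A_z & A_y\end{bmatrix})$ since $A_z z + A_y 0 = A_z z = 0$. To finish I need $\begin{bmatrix} z \\ 0\end{bmatrix} \in \mathcal{N}\left(\begin{bmatrix} Q_{zz} & Q_{zy}\\ Q_{zy}^T & Q_{yy}\end{bmatrix}\right)$ as well — i.e. I additionally need $Q_{zy}^T z = 0$. This is where positive semidefiniteness pays off again: for a PSD matrix $M$, $v^T M v = 0$ implies $Mv = 0$; applying this with $M$ the full $Q$ and $v = \begin{bmatrix} z \\ 0\end{bmatrix}$, note $v^T M v = z^T Q_{zz} z = 0$, hence $Mv = 0$, which gives both $Q_{zz} z = 0$ and $Q_{zy}^T z = 0$ simultaneously. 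So $\begin{bmatrix} z \\ 0\end{bmatrix}$ is in the intersection of the two null spaces, which by hypothesis is $\{0\}$, so $z = 0$.

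With $z = 0$ in hand, the first block equation reduces to $A_z^T \bar v = 0$, and since $\rank(A_z) = p$ means $A_z^T$ has full column rank (trivial null space), we get $\bar v = 0$. Hence $\mathbf{O}$ has trivial null space and, being square, is nonsingular.

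The main obstacle, and the only subtle point, is the step where $z^T Q_{zz} z = 0$ must be leveraged to kill the off-diagonal contribution $Q_{zy}^T z$ as well; the naive approach of working only with the $Q_{zz}$ block is insufficient because the hypothesis is about the full matrix. The clean fix is to immediately embed $z$ as $\begin{bmatrix} z \\ 0\end{bmatrix}$ and apply the PSD fact $v^T M v = 0 \Rightarrow Mv = 0$ to the full $Q$, rather than to $Q_{zz}$. I should also make explicit at the start of the proof that the full $Q$ here is positive semidefinite — this is inherited from $\mathbf{H}_{\text{pd}}^{i,(l)} \in \mathbb{S}_+$ (equivalently, it is used in Remark~\ref{rem:nonsingular} / Lemma~\ref{lem:lemD}) — since the lemma statement as written only assumes the intersection-of-null-spaces condition, and strictly speaking the PSD property is what makes "$v^TMv=0 \Rightarrow Mv=0$" available; I would note this dependence or add PSD to the hypotheses.
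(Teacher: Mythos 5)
Your proof is correct, and it takes a genuinely different route from the paper's. You argue directly on the null space of $\mathbf O$: from $Q_{zz}z + A_z^T\bar v = 0$ and $A_zz=0$ you get $z^TQ_{zz}z=0$, embed $z$ as $v=\begin{bmatrix} z \\ 0\end{bmatrix}$, use the PSD fact $v^TQv=0\Rightarrow Qv=0$ on the \emph{full} matrix $Q=\begin{bmatrix} Q_{zz} & Q_{zy}\\ Q_{zy}^T & Q_{yy}\end{bmatrix}$ to place $v$ in $\mathcal N(Q)\cap\mathcal N(\begin{bmatrix}A_z & A_y\end{bmatrix})=\{0\}$, and finish with full row rank of $A_z$ to kill $\bar v$. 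The paper instead goes top-down: it first observes that the hypotheses make the full three-block KKT matrix of \eqref{eq:QPxy} nonsingular, reads off from the linear independence of its first $n_z$ columns that $\mathcal N\bigl(\begin{bmatrix}Q_{zz}\\ Q_{zy}^T\end{bmatrix}\bigr)\cap\mathcal N(A_z)=\{0\}$, and then uses a factorization $Q=\begin{bmatrix}U\\ V\end{bmatrix}\begin{bmatrix}U\\ V\end{bmatrix}^T$ to collapse this to the textbook condition $\mathcal N(Q_{zz})\cap\mathcal N(A_z)=\{0\}$, which together with $Q_{zz}\succeq 0$ and $\rank(A_z)=p$ is equivalent to nonsingularity of $\mathbf O$. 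Your argument is shorter and self-contained (no appeal to the standard KKT-nonsingularity equivalence, no detour through the rank of the stacked block column), while the paper's version has the side benefit of exhibiting the intermediate condition $\mathcal N(Q_{zz})\cap\mathcal N(A_z)=\{0\}$ explicitly, i.e.\ that the reduced problem \eqref{eq:QPx} inherits the regularity of \eqref{eq:QPxy}. Your closing observation is also on point and applies equally to the paper's own proof: positive semidefiniteness of $Q$ is used but not listed among the lemma's hypotheses (the paper invokes it when writing $Q$ as $\begin{bmatrix}U\\ V\end{bmatrix}\begin{bmatrix}U\\ V\end{bmatrix}^T$); it holds in the application because $\mathbf H_{\text{pd}}^{i,(l)}$ and the accumulated message Hessians are PSD, but it would be cleaner to state it explicitly.
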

\begin{proof}
Firstly notice that under the assumption in the lemma, the optimality condition for \eqref{eq:QPxy}, given as
\begin{align}
\begin{bmatrix} Q_{zz} & Q_{zy} & A_z^T\\ Q_{zy}^T & Q_{yy} & A_y^T \\ A_z & A_y & 0 \end{bmatrix}\begin{bmatrix} z \\ y \\ \bar v \end{bmatrix} = \begin{bmatrix} -q_z \\ -q_y \\ \bar b \end{bmatrix},
\end{align}
has a unique solution and its coefficient matrix is nonsingular. This means that
\begin{align}
\rank\left( \begin{bmatrix} Q_{zz} \\ Q_{zy}^T \\ A_z \end{bmatrix} \right) = n_z
\end{align}
or equivalently
\begin{align}\label{eq:condlemma1}
\mathcal N\left(\begin{bmatrix}Q_{zz}\\Q_{zy}^T\end{bmatrix}\right) \cap \mathcal N(A_z) = \{ 0 \}.
\end{align}
Since $\begin{bmatrix} Q_{zz} & Q_{zy}\\ Q_{zy}^T & Q_{yy} \end{bmatrix}$ is positive semidefinite, we can rewrite it as
\begin{align}
\begin{bmatrix} Q_{zz} & Q_{zy}\\ Q_{zy}^T & Q_{yy} \end{bmatrix} = \begin{bmatrix} U \\ V \end{bmatrix} \begin{bmatrix} U \\ V \end{bmatrix}^T,
\end{align}
where assuming $\rank\left(\begin{bmatrix} Q_{zz} & Q_{zy}\\ Q_{zy}^T & Q_{yy} \end{bmatrix}\right) = r \leq n$, $\begin{bmatrix} U \\ V \end{bmatrix} \in \mathbb R^{n \times r}$ and has full column rank. Then the condition in \eqref{eq:condlemma1} can be rewritten as
\begin{align}\label{eq:cond1lemma1}
\mathcal N \left(\begin{bmatrix} U \\ V \end{bmatrix}U^T\right) \cap \mathcal N(A_z) = \mathcal N \left(U^T\right) \cap \mathcal N(A_z) = \{ 0 \}.
\end{align}
Furthermore, since $\mathcal C(U^T)$ and $\mathcal N(U)$ are orthogonal complements, we have $\mathcal N \left(UU^T\right) = \mathcal N \left(U^T\right)$, which enables us to rewrite \eqref{eq:cond1lemma1} as
\begin{align}
\mathcal N \left(UU^T\right) \cap \mathcal N(A_z) = \mathcal N \left(Q_{zz}\right) \cap \mathcal N(A_z) = \{ 0 \}
\end{align}
which is equivalent to $\mathbf O$ being nonsingular. This completes the proof.
\end{proof}
By Lemma \ref{lem:lemRank}, we can then solve \eqref{eq:OCQP} as
\begin{equation}\label{eq:Variable}
\begin{split}
 \begin{bmatrix} z\\ \bar  v \end{bmatrix}  &= \mathbf O^{-1}\left(\begin{bmatrix}-q_z \\ \bar b\end{bmatrix}-\begin{bmatrix} Q_{zy} \\ A_y  \end{bmatrix} y\right)\\&= : \begin{bmatrix} H_1 \\ H_2\end{bmatrix}y + \begin{bmatrix} h_1 \\ h_2\end{bmatrix}
\end{split}
\end{equation}
Having computed the optimal solution parametrically as a function of $y$, we can now compute the optimal objective value as a convex quadratic function of $y$, $p^\ast(y)$, by simply substituting $z$ from \eqref{eq:Variable} in the objective function of \eqref{eq:QPxy}.

We can now discuss the computation and content of the messages. Firstly notice that each of the constraints in \eqref{eq:QAppEqLogPD-b} can be written as
\begin{align*}
\mathbf A^i_1 \Delta x_{_{C_i\setminus S_{i\parent(i)}}} + \mathbf A^i_2 \Delta x_{_{S_{i\parent(i)}}} = \mathbf b^i - \mathbf A^i x^{(l)}_{_{C_i}}.
\end{align*}
For now assume that $\begin{bmatrix} \mathbf A^i_1 & \mathbf A^i_2 \end{bmatrix}$ and $\mathbf A^i_1$ are full row rank for all $i \in \mathbb N_q$. Also recall that for the problem in \eqref{eq:CPSQP} the message to be sent from agent $i$ to its parent $\parent(i)$ is given as
\begin{align}\label{eq:mijQ}
m_{ij}(\Delta x_{_{S_{i\parent(i)}}}) = \minimum_{\Delta x_{_{C_i \setminus S_{i\parent(i)}}}} \left\{  \bar F_i(\Delta x_{_{C_i}})  + \sum_{k \in \children(i)} m_{ki}(\Delta x_{_{S_{ik}}}) \right\}.
\end{align}
Then, for this problem, all the exchanged messages define quadratic functions as described above, which is shown in the following theorem.
\begin{theorem}\label{thm:thm63}
Consider the message description given in \eqref{eq:mijQ}. For the problem in \eqref{eq:CPSQP}, all the exchanged messages are quadratic functions.
\end{theorem}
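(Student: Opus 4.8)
The plan is to argue by induction on the clique tree, working upward from the leaves toward the root, exactly mirroring the message-passing order defined by \eqref{eq:mijQ}. The base case concerns a leaf agent $i$: here $\children(i) = \emptyset$, so the message reduces to $m_{i\parent(i)}(\Delta x_{_{S_{i\parent(i)}}}) = \minimum_{\Delta x_{_{C_i \setminus S_{i\parent(i)}}}} \bar F_i(\Delta x_{_{C_i}})$. Recalling the definition of $\bar F_i$ from \eqref{eq:CPSQP}, this is precisely the parametric minimization of an equality-constrained convex quadratic program of the form \eqref{eq:QPxy}, with $z \leftrightarrow \Delta x_{_{C_i \setminus S_{i\parent(i)}}}$ and $y \leftrightarrow \Delta x_{_{S_{i\parent(i)}}}$. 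The rank and nullspace hypotheses needed to invoke Lemma \ref{lem:lemRank} are supplied by the standing assumption that $\begin{bmatrix} \mathbf A^i_1 & \mathbf A^i_2 \end{bmatrix}$ and $\mathbf A^i_1$ have full row rank, together with the clique-wise nullspace condition $\mathcal N(\mathbf H_{\text{pd}}^{i,(l)}) \cap \mathcal N(\mathbf A^i) = \{0\}$ from Lemma \ref{lem:lemD}. Then the computation \eqref{eq:Variable}--and the subsequent substitution described immediately after it--shows that the optimal value $p^\ast(\Delta x_{_{S_{i\parent(i)}}})$ is a convex quadratic function of the separator variables, so $m_{i\parent(i)}$ is quadratic.

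For the inductive step, fix a non-leaf agent $i$ and assume every message $m_{ki}$ with $k \in \children(i)$ is a quadratic function of $\Delta x_{_{S_{ik}}}$. Since $S_{ik} \subseteq C_i$, each such message is a quadratic function of a subvector of $\Delta x_{_{C_i}}$, hence can be written as $\tfrac12 \Delta x_{_{C_i}}^T M_k \Delta x_{_{C_i}} + p_k^T \Delta x_{_{C_i}} + c_k$ after padding $M_k$, $p_k$ with zeros on the coordinates outside $S_{ik}$. Adding these to $\bar F_i(\Delta x_{_{C_i}})$ keeps the objective a convex quadratic in $\Delta x_{_{C_i}}$ subject to the single equality constraint $\mathbf A^i E_{C_i}(\Delta x + x^{(l)}) = \mathbf b^i$; the augmented Hessian is $\mathbf H_{\text{pd}}^{i,(l)} + \sum_{k \in \children(i)} M_k$, which is still positive semidefinite, and since each $M_k \succeq 0$ the nullspace can only shrink, so $\mathcal N(\mathbf H_{\text{pd}}^{i,(l)} + \sum_k M_k) \cap \mathcal N(\mathbf A^i) \subseteq \mathcal N(\mathbf H_{\text{pd}}^{i,(l)}) \cap \mathcal N(\mathbf A^i) = \{0\}$. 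Thus the minimization in \eqref{eq:mijQ} over $\Delta x_{_{C_i \setminus S_{i\parent(i)}}}$ is again an instance of the parametric QP \eqref{eq:QPxy}, Lemma \ref{lem:lemRank} applies, and the resulting optimal value is a convex quadratic function of $\Delta x_{_{S_{i\parent(i)}}}$--i.e., $m_{i\parent(i)}$ is quadratic. Since every agent sends its message once, after finitely many steps all exchanged messages have been shown quadratic.

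I expect the main obstacle to be the bookkeeping around the full-row-rank and nullspace conditions: strictly speaking Lemma \ref{lem:lemRank} requires that, after folding in the incoming quadratic messages, the ``$z$-block'' coefficient matrix $\mathbf A^i_1$ still has full row rank and that the modified Hessian--restricted to the $z$ variables--retains the intersection property relative to $\mathbf A^i_1$. The inclusion $\mathcal N(\mathbf H + \sum M_k) \subseteq \mathcal N(\mathbf H)$ handles the nullspace side cleanly because all the summands are PSD, but one must be careful that the messages from the children do not themselves introduce equality constraints (they do not--by the base case and induction they are genuine finite-valued quadratics, with the indicator/constraint structure already absorbed through the minimization at earlier stages), so the only equality constraint at agent $i$ is its own. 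Making this precise--that the output of the parametric minimization \eqref{eq:Variable} is a finite-valued quadratic with no residual indicator term, under the assumed rank conditions--is the one place where the argument needs care rather than routine calculation. Beyond that, the proof is a straightforward structural induction, and it can be cited to \cite[Ch. 13]{kol:09} or deferred to the appendix in the same style as Theorems \ref{thm:thm2} and \ref{thm:thm3}.
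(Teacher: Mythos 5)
Your proposal is correct and follows essentially the same route as the paper's proof: induction up the clique tree, with the base case at the leaves handled by the parametric QP machinery of Lemma \ref{lem:lemRank} and the inductive step absorbing the children's quadratic messages into the local Hessian, whose intersection-of-nullspaces condition with $\mathcal N(\mathbf A^i)$ is preserved because the added message Hessians are positive semidefinite. Your explicit remark that $\mathcal N(\mathbf H_{\text{pd}}^{i,(l)} + \sum_k M_k) \subseteq \mathcal N(\mathbf H_{\text{pd}}^{i,(l)})$ since each $M_k \succeq 0$ is a welcome detail that the paper leaves implicit.
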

\begin{proof}
We prove this using induction, where we start with the agents at the leaves. For every agent $i \in \leaves(T)$, the computed message to be sent to the corresponding parent can be computed by solving
\begin{subequations}\label{eq:thmQP}
\begin{align}
\minimize & \quad \frac{1}{2} \Delta x_{_{C_i}}^T \mathbf H_{\text{pd}}^{i,(l)} \Delta x_{_{C_i}} + (r^{i,(l)})^T \Delta x_{_{C_i}} \\
\subject & \quad \mathbf A^i (\Delta x_{_{C_i}} + x_{_{C_i}}^{(l)}) = \mathbf b^i,
\end{align}
\end{subequations}
parametrically as a function of $\Delta x_{_{S_{\parent(i)i}}}$. Under the assumption stated in Lemma \ref{lem:lemD}, $\mathcal N(\mathbf H_{\text{pd}}^{i,(l)}) \cap \mathcal N(\mathbf A^i) = \{ 0 \}$. As a result the assumption in Lemma \ref{lem:lemRank} holds for \eqref{eq:thmQP} and hence we can use the procedure discussed above to solve the problem parametrically. Consequently the messages sent from the leaves are quadratic functions. Now consider an agent $i$ in the middle of the tree and assume that all the messages received by this agent are quadratic functions of the form
\begin{align*}
m_{ki}(\Delta x_{_{S_{ik}}}) = \Delta x_{_{S_{ik}}}^T Q_{ki} \Delta x_{_{S_{ik}}} + q_{ki}^T \Delta x_{_{S_{ik}}} + c_{ki}.
\end{align*}
Then this agent can compute the message to be sent to its parent, by solving
\begin{subequations}\label{eq:QPmij}
\begin{align}
\minimize &\quad \frac{1}{2}\Delta x_{_{C_i}}^T \left( \mathbf H_{\text{pd}}^{i,(l)} + \sum_{k \in \children(i)} \bar E_{ik}^TQ_{ki} \bar E_{ik}  \right) \Delta x_{_{C_i}}  \notag\\& \quad  + \left( r^{i,(l)} + \sum_{k \in \children(i)} \bar E_{ik}^T q_{ki} \right)^T\Delta x_{_{C_i}} + \bar c_i\\
\subject &  \quad \mathbf A^i (\Delta x_{_{C_i}} + x_{_{C_i}}^{(l)}) = \mathbf b^i \\
\end{align}
\end{subequations}
with $\bar E_{ik} = E_{S_{ik}}E_{C_i}^T $, parametrically as a function of $\Delta x_{_{S_{i\parent(i)}}}$. Notice that the assumption in Lemma \ref{lem:lemD} implies that $\mathcal N\left(\mathbf H_{\text{pd}}^{i,(l)} + \sum_{k \in \children(i)} \bar E_{ik}^TQ_{ki} \bar E_{ik}\right) \cap \mathcal N(\mathbf A^i) = \{ 0 \}$. This means that the assumption in Lemma \ref{lem:lemRank} would also be satisfied and hence the computed message to the parent would be a quadratic function. This completes the proof.
\end{proof}
Notice that sending the message $m_{ij}$ to agent $j$ requires agent $i$ to send the data matrices that define the quadratic function. Following the steps of the message-passing method discussed in Section~\ref{sec:OMP}, we can now compute the primal variables direction, $\Delta x$, distributedly.

It now remains to discuss how to compute the dual variables directions, $\Delta  v^k$ for $k = 1, \dots, q$, and $\Delta \lambda^k$ for $k = 1, \dots, N$. We will next show that in fact it is possible to compute the optimal dual variables direction during the downward pass of the message-passing algorithm. Firstly recall that during the upward pass each agent $i$, except the agent at the root, having received all the messages from its children forms \eqref{eq:QPmij} and solves it parameterically as a function of $\Delta x_{_{S_{i\parent(i)}}}$ by first computing
\begin{align}\label{eq:dualvariable}
\begin{bmatrix}\Delta x_{_{C_i \setminus S_{i\parent(i)}}}\\\Delta v^i\end{bmatrix} = \begin{bmatrix} H_1^i \\ H_2^i  \end{bmatrix}\Delta x_{_{S_{i\parent(i)}}} + \begin{bmatrix} h_1^i \\ h_2^i \end{bmatrix},
\end{align}
as described above, and then communicating the parametric optimal objective value as the message to the parent. Notice that \eqref{eq:dualvariable} defines the optimality conditions for \eqref{eq:QPmij} given $\Delta x_{_{S_{i\parent(i)}}}$ or equivalently the optimality conditions of \eqref{eq:LocalProblempar} without the regularization term, which we are allowed to neglect since by the assumption in Lemma \ref{lem:lemD} the optimal solution of \eqref{eq:QAppEqLogPD1} is unique, see Remark \ref{rem:rem1}. As a result this agent having received $\Delta x_{_{S_{i\parent(i)}}}^*$ from its parent can use \eqref{eq:dualvariable} to compute its optimal primal solution. As we will show later the computed dual variables in \eqref{eq:dualvariable} during this process will also be optimal for \eqref{eq:QAppEqLogPD1}. The agent at the root can also compute its optimal dual variables in a similar manner. Particularly , this agent having received all the messages from its children can also form the problem in \eqref{eq:QPmij}. Notice that since $\parent(r) = \emptyset$ then $S_{\parent(r)r} = \emptyset$. As a result~\eqref{eq:dualvariable} for this agent becomes
\begin{align}
\begin{bmatrix}\Delta x^\ast_{_{C_r}}\\(\Delta v^r)^*\end{bmatrix} = \begin{bmatrix} h_1^r \\ h_2^r \end{bmatrix},
\end{align}
which is the optimality condition for \eqref{eq:QPmij}. Consequently, the dual variables computed by this agent when calculating its optimal primal variables will in fact be optimal for the problem in \eqref{eq:QAppEqLogPD1}. The next theorem shows that the computed primal directions $\Delta x^*$ and dual directions $\Delta v^*$ using this approach then satisfy the optimality conditions for the complete problem in~\eqref{eq:QAppEqLogPD} and hence constitute a valid update direction, i.e., we can choose $\Delta x^{(l+1)} = \Delta x^*$ and $\Delta v^{(l+1)} = \Delta v^*$.
\begin{theorem}\label{thm:thm5}
If each agent $i \in \mathbb N_q$ computes its corresponding optimal primal and dual variables directions, $\Delta x^\ast_{_{C_i}}, (\Delta v^i)^*$, using the procedure discussed above, then the calculated directions by all agents constitute an optimal primal-dual solution for the problem in \eqref{eq:QAppEqLogPD}.
\end{theorem}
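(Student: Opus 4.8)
The plan is to show that the pair $(\Delta x^\ast,\Delta v^\ast)$ --- with $\Delta x^\ast$ the vector obtained by gluing the local solutions $\Delta x^\ast_{C_i}$ and $\Delta v^\ast=\big((\Delta v^1)^\ast,\dots,(\Delta v^q)^\ast\big)$ --- satisfies the KKT system \eqref{eq:PD} of \eqref{eq:QAppEqLogPD}; by Lemma \ref{lem:lemD} and Remark \ref{rem:nonsingular} the coefficient matrix of \eqref{eq:PD} is nonsingular, so establishing this identifies $(\Delta x^\ast,\Delta v^\ast)$ as \emph{the} optimal primal-dual solution. Using $A=\blkdiag(\mathbf A^1,\dots,\mathbf A^q)\bar E$, $H_{\text{pd}}^{(l)}=\sum_i E_{C_i}^T\mathbf H_{\text{pd}}^{i,(l)}E_{C_i}$ and $r^{(l)}=\sum_i E_{C_i}^T r^{i,(l)}$, the system \eqref{eq:PD} splits into primal feasibility, $\mathbf A^i E_{C_i}(\Delta x^\ast+x^{(l)})=\mathbf b^i$ for all $i$, and stationarity, $\sum_{i=1}^q E_{C_i}^T g_i=0$ with $g_i:=\mathbf H_{\text{pd}}^{i,(l)}\Delta x^\ast_{C_i}+r^{i,(l)}+(\mathbf A^i)^T(\Delta v^i)^\ast$. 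Primal feasibility I would dispatch first: \eqref{eq:CPSQP} is an instance of \eqref{eq:CPS} whose minimizer $\Delta x^{\mathrm{glob}}$ is unique (Lemma \ref{lem:lemD}) and has finite objective, hence is feasible, so Theorem \ref{thm:thm3} gives $\Delta x^\ast_{C_i}=E_{C_i}\Delta x^{\mathrm{glob}}$ for every agent; the glued vector is therefore well defined, equals $\Delta x^{\mathrm{glob}}$, and satisfies every local equality constraint.

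For stationarity I would argue edgewise on the clique tree, using the parametric-QP machinery \eqref{eq:OCQP}--\eqref{eq:Variable} by which the quadratic messages of Theorem \ref{thm:thm63} are formed. When agent $i$ solves \eqref{eq:QPmij} parametrically and then substitutes the optimal separator value $\Delta x^\ast_{S_{i\parent(i)}}$ received from its parent, the relations \eqref{eq:dualvariable} yield both the feasibility $\mathbf A^i(\Delta x^\ast_{C_i}+x^{(l)}_{C_i})=\mathbf b^i$ and the vanishing of the $C_i\setminus S_{i\parent(i)}$ block of
\begin{align*}
\gamma_i:=\mathbf H_{\text{pd}}^{i,(l)}\Delta x^\ast_{C_i}+r^{i,(l)}+(\mathbf A^i)^T(\Delta v^i)^\ast+\sum_{k\in\children(i)}\bar E_{ik}^T\mu_{ki},
\end{align*}
where $\mu_{ki}$ denotes the linear coefficient of the incoming quadratic message $m_{ki}$ at $\Delta x^\ast_{S_{ik}}$. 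Next I would invoke an envelope-type identity --- obtained by differentiating the optimal value of \eqref{eq:QPmij} in its parameter and using the $z$-stationarity in \eqref{eq:Variable} together with $\mathbf A^i_1 H_1^i+\mathbf A^i_2=0$ (satisfied by the sensitivity in \eqref{eq:dualvariable}) --- to show that the remaining, separator, block of $\gamma_i$ equals $\mu_{i\parent(i)}$, the linear coefficient of the \emph{outgoing} message. Since $S_{ij}\subseteq C_i$, lifting by $E_{C_i}^T$ (which restores a vector supported on a subset of $C_i$ to $\mathbb R^n$ unchanged) turns these two facts into
\begin{align*}
E_{C_i}^T g_i=E_{S_{i\parent(i)}}^T\mu_{i\parent(i)}-\sum_{k\in\children(i)}E_{S_{ik}}^T\mu_{ki}
\end{align*}
for every non-root agent, and $E_{C_r}^T g_r=-\sum_{k\in\children(r)}E_{S_{rk}}^T\mu_{kr}$ at the root $r$, where $S_{\parent(r)r}=\emptyset$ and $\gamma_r$ vanishes entirely. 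Summing over all $q$ agents, each tree edge $(i,\parent(i))$ contributes $E_{S_{i\parent(i)}}^T\mu_{i\parent(i)}$ once with a $+$ sign and once, inside the child-sum of $\parent(i)$, with a $-$ sign, so the total telescopes to $\sum_{i=1}^q E_{C_i}^T g_i=0$, i.e. $H_{\text{pd}}^{(l)}\Delta x^\ast+r^{(l)}+A^T\Delta v^\ast=0$; combined with primal feasibility this is precisely \eqref{eq:PD}.

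The hard part will be the envelope identity relating $(\Delta v^i)^\ast$ and the incoming messages to the outgoing message $m_{i\parent(i)}$: one has to verify that the multiplier produced by \eqref{eq:Variable} is exactly the one for which the separator block of the Lagrangian gradient of \eqref{eq:QPmij} coincides with the linear coefficient of $m_{i\parent(i)}$, rather than a rescaled or shifted version of it. This means keeping careful track of the $\tfrac12$ normalization in \eqref{eq:QPmij} versus the quadratic-message representation, of the containments $S_{ij}\subseteq C_i\cap C_j$, and of how the selection matrices $E_{C_i}$, $E_{S_{ij}}$ and $\bar E_{ij}$ compose; once that bookkeeping is pinned down the telescoping argument is immediate.
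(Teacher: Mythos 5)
Your proposal is correct, but it proves the theorem by a genuinely different route than the paper. The paper's proof never verifies the KKT conditions ``pointwise'': it instead shows that the whole message-passing procedure \emph{is} block Gaussian elimination on the KKT system of \eqref{eq:QAppEqLogPD1} --- it writes out the full KKT matrix \eqref{eq:thmKKT}, permutes it so that each leaf's block $\mathbf O^{i_j}$ sits on the diagonal, pre-multiplies by an explicit elimination matrix $Q_1$ built from the $(\mathbf O^{i_j})^{-1}$, identifies the resulting Schur-complement data $\mathbf R_{uu},\mathbf q_u$ in \eqref{eq:thmitopar-b} with the quadratic and linear coefficients of the transmitted messages, and recurses up the tree; the downward pass is then back-substitution, so the computed $(\Delta x^\ast,\Delta v^\ast)$ solves the KKT system by construction. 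You instead take the glued candidate solution and check the two KKT blocks directly: primal feasibility via Theorem \ref{thm:thm3} plus uniqueness, and stationarity via local first-order conditions of each agent's subproblem combined with an envelope/sensitivity identity (the separator block of the local Lagrangian gradient equals the gradient of the outgoing message at the optimum) and a telescoping sum over tree edges. The two arguments are mathematically the same elimination in different clothing --- your envelope identity is exactly what the paper's explicit computation of $\mathbf R_{uu}$ and $\mathbf q_u$ establishes, and the fact that the parametric solution \eqref{eq:Variable} satisfies $\mathbf A^i_1 H^i_1+\mathbf A^i_2=0$ identically is what makes both work. Your version is more modular and avoids the large displayed matrices, at the cost of having to pin down the envelope identity and the factor-of-two bookkeeping between the message representation in Theorem \ref{thm:thm63} and the $\tfrac12$-normalized objective in \eqref{eq:QPmij} (which you rightly flag); the paper's version is heavier notationally but yields the multi-frontal block $LDL^T$ factorization interpretation of Section \ref{sec:fact} essentially for free. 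One small point of care: Theorem \ref{thm:thm3} as stated covers only the root, so the consistency $\Delta x^\ast_{C_i}=E_{C_i}\Delta x^{\mathrm{glob}}$ for non-root agents needs the downward-pass argument of \eqref{eq:LocalProblempar} together with the uniqueness noted in Remark \ref{rem:rem1}, which Lemma \ref{lem:lemD} supplies; with that, your argument is complete.
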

\begin{proof}
We prove this theorem by establishing the connection between the message-passing procedure and row and column manipulations on the KKT optimality conditions of \eqref{eq:QAppEqLogPD1}. So let us start from the leaves of the tree. From the point of view of agents $i \in \leaves(T) = \{ i_1, \dots, i_t \}$ we can rewrite \eqref{eq:QAppEqLogPD1} as
\newpage
\small
\begin{subequations}\label{eq:Agenti}
\begin{align}
\minimize_{y^{i_1}, \dots, y^{i_t},u,z} & \quad \frac{1}{2}\left( \sum_{i \in \leaves(T)} \begin{bmatrix} y^i \\ u \end{bmatrix}^T \begin{bmatrix} R^i_{yy} & \bar R^i_{yu} \\ (\bar R_{yu}^i)^T &  \bar R^i_{uu} \end{bmatrix} \begin{bmatrix} y^i \\ u \end{bmatrix} +  \right. \notag\\ & \left.\hspace{20mm} \begin{bmatrix}q^i_y \\  \bar q^i_u \end{bmatrix}^T \begin{bmatrix} y^i \\ u \end{bmatrix} + c^i\right)  + \frac{1}{2} \begin{bmatrix} u \\ z \end{bmatrix}^T \begin{bmatrix} R_{uu} & R_{uz}\\ R_{uz}^T & R_{zz} \end{bmatrix} \begin{bmatrix}  u \\ z \end{bmatrix} + \begin{bmatrix}q_u \\  q_z \end{bmatrix}^T \begin{bmatrix} u \\ z \end{bmatrix} \label{eq:Agenti-a}  \\
\subject & \quad A_y^i y^i + \bar A_{yu}^i u = b^i_y, \quad i \in \leaves(T) \label{eq:Agenti-b}\\
& \quad A_{zu} u + A_{z} z = b_z,
\end{align}
\end{subequations}
\normalsize
where $y^i = \Delta x_{_{C_i\setminus S_{i\parent(i)}}}$, $u = \Delta x_{_{S_T}}$ with $S_T = \cup _{i \in \leaves(T)} S_{i\parent(i)}$, $z = \Delta x_{_{S_{p} \setminus S_T}}$ with $S_p = \cup_{i\in \leaves(T)} V_{\parent(i)i}$, $\bar R^i_{yu} = R^i_{yu}\bar E_{i\parent(i)}$, $\bar R^i_{uu} = \bar E_{i\parent(i)}^TR^i_{uu}\bar E_{i\parent(i)}$, $\bar q_u^i = \bar E_{i\parent(i)}^Tq^i_u$ and $\bar A_{yu}^i = A_{yu}^i \bar E_{i\parent(i)}$ with $\bar E_{i\parent(i)} = E_{S_{i\parent(i)}}E_{C_i}^T$. In other words, in this formulation the variables $y$, $u$ and $z$ denote the variables present only in the subproblems assigned to the leaves, the variables that appear in both these subproblems and subproblems assigned to all the other agents and the variables that are not present in the subproblems assigned to the leaves, respectively. Furthermore, each of the terms in the sum in \eqref{eq:Agenti-a} and the constraints in \eqref{eq:Agenti-b} denote the cost functions and equality constraints that are assigned to the $i$th agent. The KKT optimality conditions for this problem can be written as

\small
\begin{align}\label{eq:thmKKT}
&\begin{bmatrix}\begin{array}{cccccc:ccccc} R_{yy}^{i_1} & 0 & \dots & 0 & \bar R_{yu}^{i_1} & 0 & (A_{y}^{i_1})^T & 0 & \dots & 0 & 0 \\ 0 & R_{yy}^{i_t} & \dots & 0 &\bar R_{yu}^{i_2} & 0 &  0 & (A_{y}^{i_1})^T & \dots & 0 & 0 \\ \vdots & \vdots & \ddots & \vdots & \vdots & \vdots &\vdots &  \vdots  &\ddots &\vdots & \vdots\\ 0 & 0 & \dots & R_{yy}^{i_t} &\bar R_{yu}^{i_t} & 0 & 0 & 0 & \dots & (A_{y}^{i_t})^T & 0 \\ (\bar R_{yu}^{i_1})^T & (\bar R_{yu}^{i_2})^T & \dots & (\bar R_{yu}^{i_t})^T & \ \ \left(R_{uu} + \sum_{i\in \leaves(T)} \bar R_{uu}^i\right) \ \ & R_{uz} & (\bar A_{yu}^{i_1})^T & (\bar A_{yu}^{i_2})^T & \dots & (\bar A_{yu}^{i_t})^T &  A_{zu}^T \\ 0 & 0 & \dots & 0 & R_{uz}^T & R_{zz} & 0 & 0 & \dots & 0 & A_z^T \\ \hdashline A_{y}^{i_1} & 0 & \dots & 0 & \bar A_{yu}^{i_1} & 0 & 0 & 0 & \dots & 0 & 0 \\ 0 & A_{y}^{i_2} & \dots & 0 & \bar A_{yu}^{i_2} & 0 &  0 & 0 & \dots & 0 & 0 \\ \vdots & \vdots & \ddots & \vdots & \vdots & \vdots &\vdots &  \vdots  &\ddots &\vdots & \vdots \\ 0 & 0 & \dots & A_{y}^{i_t} & \bar A_{yu}^{i_t} & 0 &  0 & 0 & \dots & 0 & 0 \\ 0 & 0 & \dots & 0 &  A_{zu} & A_z &  0 & 0 & \dots & 0 & 0 \end{array} \end{bmatrix} \times \notag \\ & \hspace{70mm} \begin{bmatrix} y^{i_1} \\ y^{i_2} \\ \vdots \\y^{i_t} \\ u \\ z \\ \hdashline \Delta  v^{i_1} \\ \Delta  v^{i_2} \\ \vdots \\ \Delta  v^{i_t}\\ \Delta  v_z\end{bmatrix} = \begin{bmatrix} -q^{i_1}_y \\ -q^{i_2}_y \\ \vdots \\ -q^{i_t}_y\\ - \left(\sum_{i\in \leaves(T)}  \bar q^i_u + q_u\right) \\ -q_z \\ \hdashline b^{i_1}_y \\ b^{i_2}_y \\ \vdots \\ b^{i_t}_y\\ b_z\end{bmatrix},
\end{align}
\normalsize
which by conducting column and row permutations can be rewritten as
\small
\begin{align}\label{eq:thmKKTPer}
&\begin{bmatrix}\begin{array}{cc:cc:c:cc:c:cc} R_{yy}^{i_1} & (A_{y}^{i_1})^T & 0 & 0 & \dots & 0 & 0 & \bar R_{yu}^{i_1} & 0 & 0 \\ A_{y}^{i_1} & 0 & 0 & 0 & \dots & 0 & 0 & \bar A_{yu}^{i_1} & 0 & 0\\\hdashline 0 & 0 & R_{yy}^{i_2} & (A_{y}^{i_2})^T & \dots & 0 & 0 & \bar R_{yu}^{i_2} & 0 & 0\\ 0 & 0 & A_{y}^{i_1} & 0 & \dots & 0 & 0 & \bar A_{yu}^{i_2} & 0 & 0\\\hdashline \vdots & \vdots & \vdots & \vdots & \ddots & \vdots & \vdots & \vdots & \vdots \\\hdashline 0 & 0 & 0 & 0 & \dots & R_{yy}^{i_t} & (A_{y}^{i_t})^T & \bar R_{yu}^{i_t} & 0 & 0\\ 0 & 0 & 0 & 0 & \dots & A_{y}^{i_t} & 0 & \bar A_{yu}^{i_t} & 0 & 0\\\hdashline (\bar R_{yu}^{i_1})^T & (\bar A_{yu}^{i_1})^T & (\bar R_{yu}^{i_2})^T & (\bar A_{yu}^{i_2})^T & \dots & (\bar R_{yu}^{i_t})^T & (\bar A_{yu}^{i_t})^T & \ \ \left(R_{uu} + \sum_{i\in \leaves(T)} \bar R_{uu}^i\right) \ \ & R_{uz} & A_{zu}^T \\\hdashline 0 & 0 & 0 & 0 & \dots & 0 & 0 & R_{uz}^T & R_{zz} & A_z^T \\ 0 & 0 & 0 & 0 & \dots & 0 & 0 & A_{zu} & A_{z} & 0 \end{array} \end{bmatrix} \times \notag \\ & \hspace{70mm} \begin{bmatrix} y^{i_1} \\ \Delta  v^{i_1}  \\\hdashline y^{i_2} \\ \Delta  v^{i_2} \\ \hdashline\vdots \\\hdashline y^{i_t} \\ \Delta  v^{i_t} \\\hdashline  u \\\hdashline z \\ \Delta v_z \end{bmatrix} = \begin{bmatrix} -q^{i_1}_y \\  b^{i_1}_y\\\hdashline -q^{i_2}_y \\ b^{i_2}_y \\\hdashline \vdots \\\hdashline -q^{i_t}_y\\ b^{i_t}_y \\\hdashline - \left(\sum_{i\in \leaves(T)}  \bar q^i_u + q_u\right) \\\hdashline -q_z  \\ b_z\end{bmatrix}.
\end{align}
\normalsize
By Lemma \ref{lem:lemRank}, the blocks $\begin{bmatrix} R_{yy}^{i_j} & (A_{y}^{i_j})^T \\ A_{y}^{i_j} & 0 \end{bmatrix}$  are all nonsingular and hence we can define
\footnotesize
\begin{align}
Q_1 = \begin{bmatrix} \begin{array}{c:c:c:c:c:c}  I & 0 & \dots & 0 & 0 & 0 \\ \hdashline 0 & I & \dots & 0 & 0 & 0\\ \hdashline \vdots & \vdots & \ddots & \vdots & \vdots & \vdots\\ \hdashline 0 & 0 & \dots & I & 0 & 0 \\ \hdashline  -\begin{bmatrix} (\bar R_{yu}^{i_1})^T & (\bar A_{yu}^{i_1})^T \end{bmatrix} (\mathbf O^{i_1})^{-1} & - \begin{bmatrix} (\bar R_{yu}^{i_2})^T & (\bar A_{yu}^{i_2})^T  \end{bmatrix}^{^{^{}} } (\mathbf O^{i_2})^{-1} & \dots & -\begin{bmatrix} (\bar R_{yu}^{i_t})^T & (\bar A_{yu}^{i_t})^T  \end{bmatrix} (\mathbf O^{i_t})^{-1} & I & 0 \\ \hdashline 0 & 0 & 0 & 0 & 0 & I \end{array} \end{bmatrix}
\end{align}
\normalsize
with $\mathbf O^{i_j} = \begin{bmatrix} R^{i_j}_{yy} & (A^{i_j}_y)^T \\A^{i_j}_y & 0 \end{bmatrix}$. If we pre-multiply \eqref{eq:thmKKTPer} by $Q_1$, we can rewrite it as
\small
\begin{subequations}\label{eq:thmitopar}
\begin{align}
&\begin{bmatrix} y^i \\  \Delta v^i  \end{bmatrix} = (\mathbf O^i)^{-1}\left(-\begin{bmatrix} R^i_{yu} \\ A^i_{yu} \end{bmatrix}\bar E_{i\parent(i)} u + \begin{bmatrix} -q^i_y \\ b^i_y \end{bmatrix}\right) =: \begin{bmatrix} H_1^i \\ H_2^i  \end{bmatrix}\bar E_{i\parent(i)} u + \begin{bmatrix} h_1^i \\ h_2^i \end{bmatrix}, \quad i \in \leaves(T)\label{eq:thmitopar-a} \\
& \begin{bmatrix} \left( R_{uu} + \mathbf R_{uu}\right) & R_{uz} & A_{zu}^T \\ R_{uz}^T & R_{zz} & A_z^T \\ A_{zu} & A_z & 0 \end{bmatrix} \begin{bmatrix}  u \\ z \\\Delta v_z \end{bmatrix} = \begin{bmatrix} -(q_u + \mathbf q_u) \\ -q_z \\ b_z \end{bmatrix}\label{eq:thmitopar-c}
\end{align}
\end{subequations}
\normalsize
where
\begin{subequations}\label{eq:thmitopar-b}
\begin{align}
\mathbf R_{uu} &=  \sum_{i\in \leaves(T)}\bar E_{i\parent(i)}^T \left( R_{uu}^i + (H^i_1)^TR^i_{yy} H^i_1 + (H^i_1)^TR^i_{yu} + (R^i_{yu})^T H^i_1 \right)\bar E_{i\parent(i)}\\
\mathbf q_u &= \sum_{i\in \leaves(T)} \bar E_{i\parent(i)}^T \left( q_{u}^i + (H^i_1)^Tq^i_y + (R^i_{yu})^T h^i_1 +(H^i_1)^TR^i_{yy} h^i_1 \right)
\end{align}
\end{subequations}
Notice that considering the definitions in \eqref{eq:Agenti} and \eqref{eq:thmitopar}, the matrices $H^i_1, H^i_2, h^i_1$ and $h^i_2$ in \eqref{eq:thmitopar-a} and \eqref{eq:dualvariable} are the same, and hence the terms $(H^i_1)^TR^i_{yy} H^i_1 + (H^i_1)^TR^i_{yu} + (R^i_{yu})^T H^i_1  + R^i_{uu}$ and $q^i_u + (H^i_1)^Tq^i_y + (R^i_{yu})^T h^i_1 +(H^i_1)^TR^i_{yy} h^i_1$ in \eqref{eq:thmitopar-b} are the data matrices that define the quadratic and linear terms of the message sent from each of the leaves to its parent, and the additional terms $\bar E_{i\parent(i)}^T$ and $\bar E_{i\parent(i)}$ merely assure that the messages are communicated to the corresponding parents. By performing the pre-multiplication above we have in fact pruned the leaves of the tree, and have eliminated the variables that are only present in their respective subproblems. We can now conduct the same procedure outlined in \eqref{eq:thmKKT}--\eqref{eq:thmitopar-b}, that is repartitioning of variables and performing row and column permutations, for the parents that all their children have been pruned, using~\eqref{eq:thmitopar-c}. We continue this approach until we have pruned all the nodes in the tree except for the root, as

\small
\begin{subequations}\label{eq:thmitoR}
\begin{align}
&\begin{bmatrix} \Delta x_{_{C_i\setminus S_{i\parent(i)}}} \\  \Delta v^i  \end{bmatrix} = \begin{bmatrix} H_1^i \\ H_2^i  \end{bmatrix}\Delta x_{_{S_{i\parent(i)}}} + \begin{bmatrix} h_1^i \\ h_2^i \end{bmatrix}, \quad i \in \mathbb N_q \setminus \{ r \}\label{eq:thmitoR-a} \\
& \begin{bmatrix} \left( \mathbf H_{\text{pd}}^{r,(l)} + \sum_{k \in \children(r)}\bar E_{rk}^TQ_{kr} \bar E_{rk}\right) & (\mathbf A^r)^T \\ \mathbf A^r & 0 \end{bmatrix} \begin{bmatrix}  \Delta x_{_{C_r}} \\ \Delta  v^r \end{bmatrix} = \begin{bmatrix} -\left(r^{r,(l)} + \sum_{k \in \children(r)} \bar E_{rk}^T q_{kr}\right) \\  r_{\text{primal}}^{r,(l)}\end{bmatrix}\label{eq:thmitoR-b}
\end{align}
\end{subequations}
\normalsize
where what remains to solve is the optimality conditions for the problem of the agent at the root, given in \eqref{eq:QPmij}, in \eqref{eq:thmitoR-b}. Notice that this procedure is in fact the same as the upward pass in Algorithm \ref{alg:MP}. At this point we can solve \eqref{eq:thmitoR-b} and back substitute the solution in the equations in \eqref{eq:thmitoR-a} with the reverse ordering of the upward pass, which corresponds to the downward pass through the clique tree in Algorithm \ref{alg:MP}. With this we have shown the equivalence between applying the message-passing algorithm to \eqref{eq:QAppEqLogPD1} and solving the KKT conditions of this problem by performing row/column manipulations, and hence have completed the proof.

\end{proof}
Finally, during the downward pass and by \eqref{eq:PDLambda}, each agent having computed its primal variables direction $\Delta x_{_{C_i}}^*$, can compute the dual variables directions corresponding to its inequality constraints by
\begin{align}\label{eq:DualDirectionIn}
\Delta \lambda^{k,(l+1)} = -\diag( \bar G^k(x_{_{J_k}}^{(l)}))^{-1} \left( \diag(\lambda^{k,(l)}) D\bar G^k(x_{_{J_k}}^{(l)}) \Delta x_{_{J_k}}^* -   r^{k,(l)}_{\text{cent}} \right),
\end{align}
for all $k \in \phi_i$.
\begin{rem}
\emph{Notice that the proposed message-passing algorithm for computing the primal-dual directions relies on the assumption that $\mathcal N\left(\mathbf H_{\text{pd}}^{i,(l)} + \sum_{k \in \children(i)} \bar E_{ik}^TQ_{ki} \bar E_{ik}\right) \cap \mathcal N(\mathbf A^i) = \{ 0 \}$ for all $i \in \mathbb N_q$, and the conditions in Lemma \ref{lem:lemD} describe a sufficient condition for this assumption to hold. However, the aforementioned assumption can still hold even if the conditions in Lemma \ref{lem:lemD} are not satisfied, in which case the proposed algorithm can still be used.}
\end{rem}
\begin{rem}
\emph{It is also possible to use a feasible primal interior-point method for solving the problem in \eqref{eq:DDEOPS}. For a primal interior-point method, unlike a primal-dual one, at first the KKT optimality conditions are equivalently modified by eliminating the dual variables corresponding to the inequality constraints, using the perturbed complementarity conditions as in \eqref{eq:ConvexIneqKKTPDb}. Then the resulting nonlinear system of equations is solved using the Newton method, iteratively, \emph{\cite[11.3.4]{boyd:04}}. At each iteration of a feasible primal interior-point method, we only need to update the primal variables, where their corresponding update direction is computed by solving a linear system of equations similar to \eqref{eq:PD}. In fact, applying a primal interior-point method to the problem in \eqref{eq:DDEOPS} would then, at each iteration, require solving a linear system of equations that will have the same structure as the one we solve in a primal-dual interior-point method. Hence, we can use the same message-passing procedure discussed above to compute the primal variables directions within a primal framework. Primal interior-point methods are known to perform worse than their primal-dual counterparts. However, since we do not need to compute dual variables directions at each iteration, we can relax the rank condition on the equality constraints. This is because this condition has solely been used for the proof of Theorem \ref{thm:thm5}, and only concerns the computations of the dual variables.}
\end{rem}

The distributed algorithm for computing the primal-dual directions in this section relies on the seemingly restrictive rank conditions that  $\begin{bmatrix}E_{J_1}^T(\bar A^1)^T & \dots & E_{J_N}^T(\bar A^N)^T \end{bmatrix}^T$, $\begin{bmatrix} \mathbf A^i_1 & \mathbf A^i_2 \end{bmatrix}$ and $\mathbf A^i_1$ are all full row rank for all $i \in \mathbb N_q$. Next we show that these conditions do not affect the generality of the algorithm and in fact they can be imposed by conducting a preprocessing of equality constraints.

\subsubsection{Preprocessing of the Equality Constraints}\label{sec:preprocess}


We can impose the necessary rank conditions by conducting a preprocessing on the equality constraints, prior to application of the primal-dual method. This preprocessing can be conducted distributedly over the same tree used for computing the search directions. Let us assume that the constraints assigned to each of the agents at the leaves, i.e., all $i \in \leaves(T)$, are given as
\begin{align}\label{eq:equalityConsti}
\bar{\mathbf A}^i_1 x_{_{C_i\setminus S_{i\parent(i)}}} + \bar{\mathbf A}^i_2 x_{_{S_{i\parent(i)}}} = \bar{\mathbf b}^i,
\end{align}
and that $\begin{bmatrix}\bar{\mathbf A}^i_1 & \bar{\mathbf A}^i_2 \end{bmatrix} \in \mathbb R^{\bar p_i \times n_i}$ and that $\rank(\bar{\mathbf A}^i_1) = q_i < \bar p_i$. Every such agent can then compute a rank revealing QR factorization for $\bar{\mathbf A}^i_1$ as
\begin{align}
\bar{\mathbf A}^i_1 = Q^i \begin{bmatrix} R^i \\ 0\end{bmatrix},
\end{align}
where $Q^i \in \mathbb R^{\bar p_i \times \bar p_i}$ is an orthonormal matrix and $R^i \in \mathbb R^{q_i \times |C_i \setminus S_{i\parent(i)}|}$ with $\rank(R^i) = q_i$. As a result the constraints in \eqref{eq:equalityConsti} can be equivalently rewritten as
\begin{align}\label{eq:equalityConstiReform}
\begin{bmatrix} \mathbf A_1^i & \mathbf A^i_2 \\ 0 & \mathbf A^i_3 \end{bmatrix} x_{_{C_i}} = \begin{bmatrix} \mathbf b^i\\ \hat{\mathbf b}^i \end{bmatrix}
\end{align}
where
\begin{align*}
\begin{bmatrix} \mathbf A_1^i & \mathbf A^i_2 \\ 0 & \mathbf A^i_3 \end{bmatrix} &:=Q^i \begin{bmatrix} \bar{\mathbf A}^i_1 & \bar{\mathbf A}^i_2 \end{bmatrix}\\
\begin{bmatrix} \mathbf b^i\\  \hat{\mathbf b}^i \end{bmatrix} & := Q^i \bar{\mathbf b}^i.
\end{align*}
Once each agent at the leaves has computed the reformulation of its equality constraints, it will then remove the equality constraints defined by the second row equations in \eqref{eq:equalityConstiReform} from its equality constraints, and communicates them to its parent. At this point the equality constraints assigned to each agent $i$ at the leaves, becomes 
\begin{align*}
\begin{bmatrix} \mathbf A_1^i & \mathbf A^i_2 \end{bmatrix} x_{_{C_i}} = \mathbf b^i,
\end{align*}
where $\begin{bmatrix} \mathbf A_1^i & \mathbf A^i_2 \end{bmatrix}$ and $\mathbf A_1^i$ are both full row rank.
Then every parent that has received all the equality constraints from its children, appends these constraints to its own set of equality constraints, and performs the same procedure as was conducted by the agents at the leaves. This process is then continued until we reach the root of the tree. The agent at the root will then conduct the same reformulation of its corresponding equality constraints and removes the unnecessary trivial equality constraints. Notice that at this point the equality constraints for all agents satisfy the necessary rank conditions, and hence the preprocessing is accomplished after an upward pass through the tree.
\begin{remark}
In a similar manner as in the proof of Theorem \ref{thm:thm5}, it can be shown that the preprocessing procedure presented in this section (except for the removal of trivial constraints by the agent at the root) can be viewed as conducting column permutations on the coefficient matrix of the equality constraints and pre-multiplying it by a nonsingular matrix. Consequently, this preprocessing of the equality constraints, does not change the feasible set.
\end{remark}

In the proof of Theorem \ref{thm:thm5} we described the equivalence between applying the message-passing scheme in Algorithm \ref{alg:MP} to the problem in \eqref{eq:QAppEqLogPD1} and solving its corresponding KKT system through column and row manipulations. Inspired by this discussion, and before we describe distributed implementations of other components of the primal-dual interior-point method in Algorithm \ref{alg:PD}, we explore how the message-passing algorithm in \ref{alg:MP} can be construed as a multi-frontal factorization technique.

\subsection{Relations to Multi-frontal Factorization Techniques}\label{sec:fact}

Let us compactly rewrite the KKT system in \eqref{eq:thmKKT} as
\begin{align}
H \begin{bmatrix} y \\ u\\ z \\ \Delta v  \end{bmatrix} = r.
\end{align}
Then \eqref{eq:thmKKTPer} can be written as
\begin{align}\label{eq:compactKKTPer}
P_1HP_1^T P_1 \begin{bmatrix} y \\ u\\ z \\ \Delta v  \end{bmatrix} = P_1 r,
\end{align}
where $P_1$ is a permutation matrix. In the proof of Theorem \ref{thm:thm5} we showed that by pre-multiplying \eqref{eq:thmKKTPer} by $Q_1$, we can block upper-triangulate the KKT system as in \eqref{eq:thmitopar}, i.e., $Q_1P_1HP_1^T$ is block upper-triangular. This was in fact equivalent to the first stage of the upward pass in Algorithm \ref{alg:MP}, which corresponds to sending messages from the agents at the leaves of the tree to their parents. If we now in this stage multiply $Q_1P_1HP_1^T$ from the right by $Q_1^T$, it is straightforward to verify that we arrive at
\begin{align}\label{eq:factorKKTDiag}
Q_1P_1HP_1^TQ_1^T = \begin{bmatrix}\begin{array}{c:c:c:c:c} \blacksquare & 0 & \dots & 0 & 0 \\\hdashline 0 & \blacksquare & \dots & 0 & 0  \\\hdashline \vdots & \vdots &\hspace{1mm} \ddots \  \ & 0 & 0 \\\hdashline 0 & 0 & 0 & \blacksquare & 0 \\\hdashline 0 & 0 & 0 & 0 & \begin{matrix}\vspace{-3mm}\\\hspace{1mm}\Bigblacksquare \end{matrix} \end{array} \end{bmatrix},
\end{align}
and as a result we have block-diagonalized $H$, where we have $t+1$ blocks on the diagonal. Notice that the first $t$ blocks on the diagonal are the matrices $\mathbf O^i$ for $i = i_1, \dots, i_t$ that are known to each of the agents at the leaves. Furthermore, the information needed to form $Q_1$ is distributedly known by the agents at the leaves, since we can write $Q_1$ as
\begin{multline}
Q_1 = \begin{bmatrix} \begin{array}{c:c:c:c:c:c}  I & 0 & \dots & 0 & 0 & 0 \\ \hdashline 0 & I & \dots & 0 & 0 & 0\\ \hdashline \vdots & \vdots & \ddots & \vdots & \vdots & \vdots\\ \hdashline 0 & 0 & \dots & I & 0 & 0 \\ \hdashline  -\begin{bmatrix} (\bar R_{yu}^{i_1})^T & (\bar A_{yu}^{i_1})^T \end{bmatrix}^{^{^{}} } (\mathbf O^{i_1})^{-1} & 0 & \dots & 0 & I & 0 \\ \hdashline 0 & 0 & 0 & 0 & 0 & I \end{array} \end{bmatrix} \times\\ \begin{bmatrix} \begin{array}{c:c:c:c:c:c}  I & 0 & \dots & 0 & 0 & 0 \\ \hdashline 0 & I & \dots & 0 & 0 & 0\\ \hdashline \vdots & \vdots & \ddots & \vdots & \vdots & \vdots\\ \hdashline 0 & 0 & \dots & I & 0 & 0 \\ \hdashline  0 & - \begin{bmatrix} (\bar R_{yu}^{i_2})^T & (\bar A_{yu}^{i_2})^T  \end{bmatrix}^{^{^{}} } (\mathbf O^{i_2})^{-1} & \dots & 0 & I & 0 \\ \hdashline 0 & 0 & 0 & 0 & 0 & I \end{array} \end{bmatrix} \times \\ \dots \times  \begin{bmatrix} \begin{array}{c:c:c:c:c:c}  I & 0 & \dots & 0 & 0 & 0 \\ \hdashline 0 & I & \dots & 0 & 0 & 0\\ \hdashline \vdots & \vdots & \ddots & \vdots & \vdots & \vdots\\ \hdashline 0 & 0 & \dots & I & 0 & 0 \\ \hdashline  0 & 0 & \dots & -\begin{bmatrix} (\bar R_{yu}^{i_t})^T & (\bar A_{yu}^{i_t})^T  \end{bmatrix}^{^{^{}} } (\mathbf O^{i_t})^{-1} & I & 0 \\ \hdashline 0 & 0 & 0 & 0 & 0 & I \end{array} \end{bmatrix}.
\end{multline}
This then means that not only it is possible to block-triangulate $H$ in the first stage of the upward pass as in \eqref{eq:factorKKTDiag}, but also the information that is needed to do so is distributed among the involved agents and is based on their local information. It is possible to continue this procedure by block-triangulating the last diagonal block in right hand side of \eqref{eq:factorKKTDiag} as below
\small
\begin{multline}
Q_2P_2Q_1P_1HP_1^TQ_1^TP_2^T Q_2^T =\begin{bmatrix} \begin{array}{c:c}  I \ \ & 0 \\ \hdashline 0 & \begin{matrix} \vspace{-3mm}\\\bar P_2 \end{matrix}  \end{array} \end{bmatrix} \begin{bmatrix} \begin{array}{c:c}  I  \ \ & 0 \\ \hdashline 0 & \begin{matrix} \vspace{-3mm}\\\bar Q_2 \end{matrix}  \end{array} \end{bmatrix} \begin{bmatrix} \begin{array}{c:c}  \begin{matrix} \blacksquare & & & \\ & \blacksquare & & \\ & & \hspace{1mm} \ddots \  \ & \\ & & & \blacksquare \end{matrix} & 0 \\ \hdashline 0 & \begin{matrix}\vspace{-3mm}\\\hspace{1mm}\Bigblacksquare \end{matrix}  \end{array} \end{bmatrix}\times \\\begin{bmatrix} \begin{array}{c:c}  I \ \ & 0 \\ \hdashline 0 & \begin{matrix} \vspace{-3mm}\\\bar P_2 \end{matrix}  \end{array} \end{bmatrix}^T \begin{bmatrix} \begin{array}{c:c}  I  \ \ & 0 \\ \hdashline 0 & \begin{matrix} \vspace{-3mm}\\\bar Q_2 \end{matrix}  \end{array} \end{bmatrix}^T =   \begin{bmatrix} \begin{array}{c:c}  \begin{matrix} \blacksquare & & & \\ & \blacksquare & & \\ & & \hspace{1mm} \ddots \  \ & \\ & & & \blacksquare \end{matrix} & 0 \\ \hdashline 0 & \begin{matrix}\begin{array}{c:c:c:c:c} \blacksquare & 0 & \dots & 0 & 0 \\\hdashline 0 & \blacksquare & \dots & 0 & 0  \\\hdashline \vdots & \vdots &\hspace{1mm} \ddots \  \ & 0 & 0 \\\hdashline 0 & 0 & 0 & \blacksquare & 0 \\\hdashline 0 & 0 & 0 & 0 & \begin{matrix}\vspace{-3mm}\\\hspace{1mm}\bigblacksquare \end{matrix} \end{array} \end{matrix} \end{array} \end{bmatrix}
\end{multline}
\normalsize
where similar to the previous stage $\bar P_2$ is a permutation matrix and $\bar Q_2$ is computed using a similar approach as $Q_1$. Here the newly generated small diagonal blocks are the matrices $\mathbf O^i$ with $i$ being indices of the parents of the leaves that have received all messages from their children. This step of block-diagonalization can be accomplished after the second step of the upward pass in the clique tree. We can continue this procedure upwards through the tree until we have $q$ blocks on the diagonal at which point we have arrived at the root of the tree. So having finished the upward pass we have computed
\begin{align}
\underbrace{Q_{L+1} P_{L+1} \times \dots \times Q_2 P_2 Q_1 P_1}_{L^{-1}} H \underbrace{P_{1}^T Q_{1}^T P_{2}^T Q^T_{2} \times \dots \times P^T_{L+1} Q^T_{L+1}}_{L^{-T}} = \begin{bmatrix}\begin{array}{c:c:c:c:c} \blacksquare & 0 & \dots & 0 & 0 \\\hdashline 0 & \blacksquare & \dots & 0 & 0  \\\hdashline \vdots & \vdots &\hspace{1mm} \ddots \  \ & 0 & 0 \\\hdashline 0 & 0 & 0 & \blacksquare & 0 \\\hdashline 0 & 0 & 0 & 0 & \blacksquare \end{array} \end{bmatrix}
\end{align}
with $q$ diagonal elements that are the matrices $\mathbf O^i$ for $i \in \mathbb N_q$. Notice that this means that by the end of an upward pass through the tree we have in fact computed an indefinite block $LDL^T$ factorization of $H$ where both the computation and storage of the factors are done distributedly over the clique tree.
\begin{rem}
\emph{As was shown in this section, the message-passing scheme can be viewed as a distributed multi-frontal indefinite block $LDL^T$ factorization technique that relies on fixed pivoting. This reliance is in conformance with and dictated by the structure in the problem which can in turn make the algorithm vulnerable to numerical problems that can arise, e.g., due ill-posed subproblems. Such issues can be addressed using regularization and/or dynamic pivoting strategies. Here, however, we abstain from discussing such approaches as the use of them in a distributed setting is beyond the scope of this paper.}
\end{rem}
So far we have described a distributed algorithm for computing the primal-dual directions. In the next section we put forth a distributed framework for computing proper step sizes for updating the iterates, and we will also propose a distributed method for checking the termination condition at every iteration.

\subsection{Distributed Step Size Computation and Termination}\label{sec:Step}

In this section, we first propose a distributed scheme for computing proper step sizes that relies on the approach described in Section \ref{sec:PDIPM}. This scheme utilizes, the clique tree used for calculating the primal-dual directions, for computing the step size. Similar to the message-passing procedure discussed in the previous section, in this scheme we also start the computations from the leaves of the tree. The proposed scheme comprises of two stages. During the first stage a step size bound is computed that assures primal and dual feasibility, with respect to the inequality constraints, of the iterates, and then during the second stage a back tracking line search is conducted for computing the step size which also assures persistent decrease of primal and dual residual norms. Within the first stage, let each leaf of the tree, $i$, firstly compute its bound $\bar \alpha^{i,(l+1)}$ by performing a local line search. This means that initially every agent at the leaves computes

\begin{align*}
\alpha_{\textrm{max}}^i = \minimum \left\{ 1, \minimum_{k \in \phi_i, j \in \mathbb N_{m_k}} \left\{ -\lambda_j^{k,(l)}/\Delta \lambda_j^{k,(l+1)}  \ \big | \ \Delta \lambda_j^{k,(l+1)} < 0  \right\} \right\},
\end{align*}
and then performs a local line search based on its corresponding inequality constraints, i.e., $\bar G^k$ for $k \in \phi_i$, to compute \\

\begin{algorithmic}
  \While{$\exists \ j \ \text{and} \ k \ \colon \bar G_j^k (x_{_{J_k}}^{(l)} + \alpha^{i,(l+1)} \Delta x_{_{J_k}}^{(l+1)}) > 0 $}
    \State $\bar  \alpha^{i,(l+1)} = \beta\bar  \alpha^{i,(l+1)}$
  \EndWhile
\end{algorithmic}
with $\beta \in (0,1)$ and $\bar \alpha^{i,(l+1)}$ initialized as $0.99 \alpha^i_{\textrm{max}}$. These agents will also compute the quantities
\begin{equation}
\begin{split}
p^{i,(l)}_{\textrm{norm}} &= \| r_{\text{primal}}^{i,(l)} \|^2,\\
d^{i,(l)}_{\textrm{norm}} &= \| r_{\text{dual}}^{i,(l)} \|^2,
\end{split}
\end{equation}
with $r_{\text{primal}}^{i,(l)}$ defined as in \eqref{eq:primalRes} and
\begin{align}
r_{\text{dual}}^{i,(l)} = \sum_{k \in \phi_i}\left(\nabla \bar f_i(x_{_{J_k}}^{(l)}) + \sum_{j = 1}^{m_i} \lambda_j^{k,(l)}\nabla \bar G_j^k(x_{_{J_k}}^{(l)})\right) + (\mathbf A^i)^T v^{i,(l)}´.
\end{align}
These will be used in the second stage of the step size computation. Once all leaves have computed their corresponding $\bar \alpha^{i,(l+1)}$, $p^{i,(l)}_{\textrm{norm}}$ and $d^{i,(l)}_{\textrm{norm}}$, they send these quantities to their parents where they will also conduct a similar line search and similar computations as the ones performed in the leaves. Specifically, let agent $p$ be a parent to some leaves. Then the only differences between the computations conducted by this agent and the leaves are in that the line search above is initialized as $\minimum\left\{\minimum_{k\in \children(p)}\left\{ \bar \alpha^{k,(l+1)} \right\}, 0.99 \alpha^p_{\textrm{max}}\right\}$ and that
\begin{equation}\label{eq:cliqueRes}
\begin{split}
p^{p,(l)}_{\textrm{norm}} &= \| r_{\text{primal}}^{i,(l)} \|^2 + \sum_{k \in \children(p)} p^{k,(l)}_{\textrm{norm}}, \\
d^{p,(l)}_{\textrm{norm}} &= \| r_{\text{dual}}^{i,(l)} \|^2 + \sum_{k \in \children(p)} d^{k,(l)}_{\textrm{norm}}.
\end{split}
\end{equation}
Using this procedure each agent communicates its computed $\bar \alpha^{i,(l+1)}$, $p^{i,(l)}_{\textrm{norm}} $ and $d^{i,(l)}_{\textrm{norm}}$ upwards through the tree to the root. Once the root has received all the computed step size bounds from its children/neighbors, it can then compute its local step size bound in the same manner. However, the computed bound at the root, $\bar  \alpha^{r,(l+1)}$, would then constitute a bound on the step size for updating the iterates which ensures primal and dual feasibility for the whole problem. Furthermore the computed $d^{r,(l)}_{\textrm{norm}}$ and $d^{r,(l)}_{\textrm{norm}}$ at the root will then constitute the norm of the primal and dual residuals for the whole problem computed at the iterates at iteration $l$.  This finishes the first stage of the step size computation. The second stage, is then started by communicating this step size bound downwards through the tree until it reaches the leaves. At which point each agent at the leaves computes the quantities $p^{i,(l+1)}_{\textrm{norm}}$ and $p^{i,(l+1)}_{\textrm{norm}}$ as above with the updated local iterates using the step size $\bar  \alpha^{r,(l+1)}$. These quantities are then communicated upwards through the tree to the root where each agent having received these quantities from all its children computes its corresponding $p^{i,(l+1)}_{\textrm{norm}}$ and $p^{i,(l+1)}_{\textrm{norm}}$ as in \eqref{eq:cliqueRes} using the updated local iterates. Once the root have received all information from its children it can also compute its corresponding quantities which correspond to the primal and dual residuals for the whole problem computed at the updated iterates using the step size $\bar  \alpha^{r,(l+1)}$. Then in case
\begin{align}
p^{r,(l+1)}_{\textrm{norm}} + p^{r,(l+1)}_{\textrm{norm}} > (1 - \gamma \bar \alpha^{r,(l+1)})^2\left( p^{r,(l)}_{\textrm{norm}} + p^{r,(l)}_{\textrm{norm}} \right)
\end{align}
we set $\bar \alpha^{r,(l+1)} = \beta \bar \alpha^{r,(l+1)}$ and the same procedure is repeated. However if the condition above is not satisfied, the step size computation is completed and we can choose $\alpha^{(l+1)} =\bar \alpha^{r,(l+1)}$, which is then communicated downwards through the tree until it reaches the leaves. At this point all agents have all the necessary information to update their local iterates. Notice that since all the agents use the same step size, the updated local iterates would still be consistent with respect to one another.

Having updated the iterates, it is now time to decide on whether to terminate the primal-dual iterations. In order to make this decision distributedly, we can use a similar approach as for the step size computation. Particularly, similar to the approach above, the computations are initiated from the leaves where each leaf $i$ computes the norm of its local surrogate duality gap as
\begin{equation}
\begin{split}
\hat \eta^{i,(l+1)} &= \sum_{k \in \phi_i} -(\lambda^{k,(l+1)})^T \bar G^k(x_{_{J_k}}^{(l+1)})
\end{split}
\end{equation}
The leaves then communicate these computed quantities to their corresponding parents, which will then perform the following computations
\begin{equation}
\begin{split}
\hat \eta^{p,(l+1)} &= \sum_{k \in \phi_i} -(\lambda^{k,(l+1)})^T \bar G^k(x_{_{J_k}}^{(l+1)}) + \sum_{k \in \children(p)}\hat \eta^{k,(l+1)}
\end{split}
\end{equation}
This approach is continued upwards through the tree until we reach the root. The computed quantity by the root, i.e., $\hat \eta^{r,(l+1)}$, will then be equal to the surrogate duality gap for the whole problem. This quantity together with $p^{r,(l+1)}_{\textrm{norm}}$, $d^{r,(l+1)}_{\textrm{norm}}$, which was computed during the step size computation, are used by the agent at the root to decide whether to terminate the primal-dual iterations. In case the decision is to not to terminate the iterations, then the computed surrogate duality gap is propagated downwards through the tree until it reaches the leaves of the tree, which then enables each of the agents to compute the perturbation parameter, $t$, and form their respective subproblems for the next primal-dual iteration. However, in case the decision is to terminate, then only the decision will then be propagated downwards through the tree.

By now we have put forth a distributed primal-dual interior-point method for solving loosely coupled problems. In the next section we summarize the proposed algorithm and discuss its computational properties.

\subsection{Summary of the Algorithm and Its Computational Properties}
Let us reconsider the problem in \eqref{eq:DDEOPS}. As was mentioned before, this problem can be seen as a combination of $N$ subproblems each of which is expressed by the objective function $\bar f_i$ and equality and inequality constraints defined by $\bar A^i$, $b_i$ and $\bar G^i$, respectively. Given such a problem and its corresponding sparsity graph $G_s$, in order to set up the proposed algorithm, we first need to compute a chordal embedding for the sparsity graph. Having done so, we compute the set of cliques $\mathbf C_G = \{ C_1, C_2, \dots, C_q \}$ for this chordal embedding and a clique tree over this set of cliques. With the clique tree defined, we have the computational graph for our algorithm, and we can assign each of the subproblems to a computational agent, using the guidelines discussed in Section \ref{sec:OMP}. At this point we can perform the preprocessing procedure presented in Section \ref{sec:preprocess}, if necessary, and apply our proposed distributed algorithm as summarized below to the reformulated problem.
\begin{flushleft}
\begin{algorithmic}
\State{Given $l = 0$, $\mu>1$, $\epsilon>0$, $\epsilon_{\text{feas}}>0$, $\lambda^{(0)} > 0$, $ v^{(0)}$, $x^{(0)}$ such that $\bar G^i(x_{_{J_i}}^{(0)}) \prec 0$ for all $i = 1, \dots, N$,~$\hat \eta^{(0)} = \sum_{i=1}^N -(\lambda^{i,(0)})^T\bar G^i(x_{_{J_i}}^{(0)})$ and $t = \left(\mu \sum_{i=1}^N m_i\right) /\hat \eta^{(0)}$}
\Repeat
\For {$i = 1, \dots, q$}
\State{Given $t$, $x_{_{C_i}}^{(l)}$, $ v^{i,(l)}$ and $\lambda^{k,(l)}$ for $k \in \phi_i$, agent $i$ forms its}
\State{quadratic subproblems based on its assigned objective}
\State{functions and constraints as described in \eqref{eq:QP1}--\eqref{eq:CPSQP}.}
\EndFor
\State{Perform message-passing upwards through the clique tree}
\State{Perform a downward pass through the clique tree where each agent $i$}
\State{having received optimal solutions $\Delta x_{_{S_{i\parent(i)}}}^*$,}
\State{\hspace{4mm} computes $\Delta x_{_{C_i}}^{(l+1)}$ and $\Delta v^{i,(l+1)}$ using \eqref{eq:dualvariable};}
\State{\hspace{4mm} and then computes $\Delta \lambda^{k,(l+1)}$ for all $k \in \phi_i$ using \eqref{eq:DualDirectionIn}.}
\State{Compute a proper step size, $\alpha^{(l+1)}$, by performing}
\State{upward-downward passes through the clique tree as discussed}
\State{in Section \ref{sec:Step}.}
\For {$i = 1, \dots, q$}
\State{Agent $i$ updates,}
\State  $\quad x_{_{C_i}}^{(l+1)} =  x_{_{C_i}}^{(l)} + \alpha^{(l+1)}\Delta x_{_{C_i}}^{(l+1)}$;
\State  $\quad \lambda^{k,(l+1)} =  \lambda^{k,(l)} + \alpha^{(l+1)}\Delta \lambda^{k,(l+1)}$ for all $k \in \phi_i$;
\State  $\quad  v^{i,(l+1)} =   v^{i,(l)} + \alpha^{(l+1)}\Delta  v^{i,(l+1)}$;
\EndFor
\State{Perform upward-downward pass through the clique tree to}
\State{decide whether to terminate the algorithm and/or to update}
\State{the perturbation parameter $t = \left(\mu \sum_{i=1}^N m_i\right) /\hat \eta^{(l+1)}$.}
\State {$l = l + 1$.}
\Until{the algorithm is terminated}
\end{algorithmic}
\end{flushleft}
As can be seen from the summary of the algorithm above, at each iteration of the primal-dual method we need to perform several upward-downward passes through the clique tree, one for computing the primal variables direction, one to make decision regarding terminating the algorithm and/or for updating the perturbation parameter and several for computing a proper step size. Notice that among the required upward-downward passes, the one conducted for computing the primal and dual variables directions is by far the most computationally demanding one. This is because at every run of this upward-downward pass each agent needs to form \eqref{eq:dualvariable}, which requires inverting its corresponding $\mathbf O^i$. Since primal-dual interior point methods commonly converge to a solution within 30--50 iterations, the computational burden for each agent is dominated by at most $50$ factorizations that it has to compute within the run of the primal-dual algorithm. Also notice that the required number of upward-downward passes for computing the step size, depend on the back-tracking parameters $\alpha$ and $\beta$ and it is possible to reduce this number by tuning these parameters carefully. Furthermore, for the final iterations of the primal-dual method, also known as quadratic convergence phase, there would be no need for any back-tracking operation. Let us assume that the height of the tree is equal to $L$ and that the total number of upward-downward passes that is required to accomplish the second stage of step size computations is equal to $B$. Then assuming that the primal-dual method converges within 50 iterations, the total number of upward-downward passes would mount to $B + 3\times50$ and hence the algorithm converges after $2\times L\times(B+3\times50)$ steps of message passing. Also within the run of this distributed algorithm each agent would then need to compute a factorization of a small matrix at most $50$ times and communicate with its neighbors $2\times(B+3\times50)$ times.
\begin{rem}
\emph{As was discussed in Remark \ref{rem:infeas} the primal-dual method used in this paper is an infeasible long step primal-dual method, which requires solving \eqref{eq:PD} or \eqref{eq:PDQP} only once at each iteration. However in predictor-corrector variants of primal-dual methods, computing the search directions requires solving \eqref{eq:PD} or \eqref{eq:PDQP} twice with different $r^{(l)}$ terms. This means that distributed algorithms based on message-passing that rely on predictor-corrector primal-dual methods would need two upward-downward passes to compute the search directions. However, despite the change of $r^{(l)}$, the matrices $\mathbf O^i$ formed by each agent during the upward-pass of the message-passing remains the same for both of the mentioned upward-downward passes. Consequently, each agent by caching the factorization of $\mathbf O^i$ at each iteration of the primal-dual method can significantly reduce the computational burden of the second upward-downward pass. Notice that considering the discussion in Section \ref{sec:fact}, this approach is equivalent to the caching of the factorization of the coefficient matrix of \eqref{eq:PD}}.
\end{rem}

\begin{rem}
\emph{As can be seen from the summary of the proposed algorithm, we need to initialize the algorithm with a feasible starting point, i.e., $x^{(0)}$ such that $\bar G^i(x_{_{J_i}}^{(0)}) \prec 0$ for all $i = 1, \dots, N$ and $\lambda^{(0)} > 0$. Constructing a $\lambda^{(0)}>0$ can be done independently by each agents. However, producing a suitable $x^{(0)}$ is nontrivial. In order to generate such a starting point we suggest making use of a Phase \emph{I} method based on minimizing sum of infeasibilties, \cite{boyd:04}, which entails solving the following optimization problem
\begin{subequations}\label{eq:DDEOPSPhaseI}
\begin{align}
\minimize_{S,x} \quad & \sum_{i=1}^N \mathbf 1^T s^i\\
\subject \quad & \bar G^i(E_{J_i}x) \preceq s^i,  \quad  i = 1, \dots, N,\\
& s^i \succeq -\epsilon,  \quad  i = 1, \dots, N,
\end{align}
\end{subequations}
where $\epsilon$ is a very small positive scalar and $S = (s^1, \dots, s^N)$ with $s^i \in \mathbb R^{m_i}$. In case the optimal objective value of the problem is equal to $-\epsilon\times\left( \sum_{i=1}^N m_i \right)$, then the solution $x^*$ of the problem constitutes a proper starting point for our proposed distributed algorithm. Notice that the problem in \eqref{eq:DDEOPSPhaseI} has the same coupling structure as in \eqref{eq:DDEOPS}, and hence we can use our proposed distributed algorithm, based on the same clique tree or computational graph, for computing a feasible starting point. However, for the problem in \eqref{eq:DDEOPSPhaseI}, we can easily construct a proper starting point for the algorithm. For instance, $x^{(0)} = 0$ and $s_j^{i,(0)} = \maximum(\bar G_j^i(E_{J_i}x^{(0)}), -\epsilon)$ constitute a feasible starting point, which each agent can compute independently from others.}
\end{rem}
Next we illustrate the performance of the algorithm using a numerical experiment.
\section{Numerical Experiments}\label{sec:number}
\begin{figure}[t]
\begin{center}
\includegraphics[width=6cm]{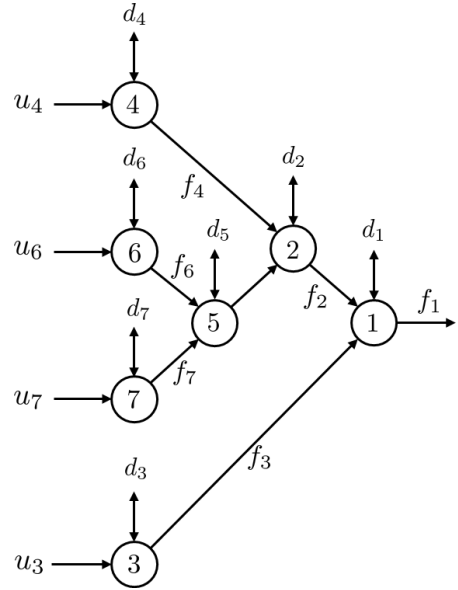}    
\caption{\small Flow problem setup \normalsize }
\label{fig:num1}
\end{center}
\end{figure}
In this section, we investigate the performance of the algorithm using an example. To this end we consider a flow problem over a tree where having received input flows from the leaves of the tree, i.e., $u_i$ for all $i \in \leaves(T)$, the collective of agents are to collaboratively provide an output flow from the root of the tree that is as close as possible to a given reference, $O_{\textrm{ref}}$. We assume that each agent $i$ in the tree produces an output flow $f_i$ that depends on the flow it receives from its children and the use of its buffer which is described using its buffer flow $d_i$, where a positive $d_i$ suggests borrowing from the buffer and a negative $d_i$ suggests directing flow into the buffer. Furthermore, there exists a cost associated with the use of the buffer and a toll for using each edge for providing flow to respective parents. The setup considered in this section is depicted in Figure \ref{fig:num1}, that is based on a tree with 7 agents. We intend to provide the requested output flow from the tree while accepting the input flow to the leaves, with minimum collective cost for the agents in the network. This problem can be formulated as

\begin{subequations}
\begin{align}
\minimize_{x} & \quad \sum_{i = 2}^q \frac{1}{2}\left( \mu_ix_i^2 + \rho_i x_{q+i}^2 \right) + \frac{1}{2}\left(\sigma\times(x_{q+1}-O_{\textrm{ref}})^2 + \mu_1 x_1^2 \right)\\
\subject & \quad \begin{rcases*} u_i + x_i = x_{q+i}\\ | x_i | \leq c_i \end{rcases*} \quad i\in \leaves(T)\\& \quad \begin{rcases*} \sum_{k \in \children(i)} x_{q+k} + x_i = x_{q+i}\\   |x_i| \leq c_i   \end{rcases*} \quad i \in \mathbb N_q \setminus \leaves(T),
\end{align}
\end{subequations}
where $x = (d_1, \dots, d_q, f_1, \dots, f_q)$ with $q = 7$, the parameters $\mu_i$, $\rho_i$ and $c_i$ denote the buffer use cost, the toll on outgoing edge and  the buffer use capacity for each agent $i$, respectively, and $\sigma$ denotes the cost incurred on the agent at the root for providing a flow that deviates from the requested output flow. Here we assume that the values of the parameters $\mu_i$, $c_i$, $\sigma$ are private information for each agent, which makes it impossible to form the centralized problem. Let us now rearrange the terms in the cost function and rewrite the problem as
\begin{subequations}\label{eq:numericalExample}
\begin{align}
\minimize_{x} & \quad \sum_{i = 2}^q \frac{1}{2}\left( \mu_ix_i^2 + \frac{\rho_i}{2} x_{q+i}^2 + \sum_{k\in\children(i)}\frac{\rho_k}{2} x_{q+k}^2 \right)\notag\\& \quad  + \frac{1}{2}\left(\sigma\times(x_{q+1}-O_{\textrm{ref}})^2 + \mu_1 x_1^2 + \sum_{k\in\children(1)}\frac{\rho_k}{2} x_{q+k}^2 \right)\\
\subject & \quad \begin{rcases*} u_i + x_i = x_{q+i}\\ | x_i | \leq c_i \\ x_{q+i} \geq 0\end{rcases*} \quad i\in \leaves(T)\\& \quad \begin{rcases*} \sum_{k \in \children(i)} x_{q+k} + x_i = x_{q+i}\\   |x_i| \leq c_i \\ x_{q+i} \geq 0  \end{rcases*} \quad i \in \mathbb N_q \setminus \leaves(T).
\end{align}
\end{subequations}
This problem can now be seen as a combination of $q = 7$ coupled subproblems where each of the subproblems is defined by each of the $q$ terms in the cost function and each of the $q$ constraint sets. The clique tree for the sparsity graph of this problem is illustrated in Figure \ref{fig:num2} and has the same structure as the flow network. As a result, this problem can be solved distributedly using the proposed message-passing algorithm while respecting the privacy of all agents.
\begin{figure}[t]
\begin{center}
\includegraphics[width=8cm]{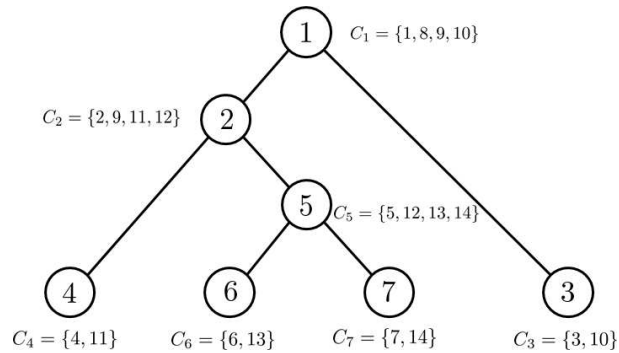}    
\caption{\small The corresponding clique tree for the sparsity graph of the flow problem \normalsize }
\label{fig:num2}
\end{center}
\end{figure}
We have solved 50 instances of the problem in \eqref{eq:numericalExample} where the parameters are chosen randomly with uniform distribution such that $u_i \in (0, 20)$, $\mu_i \in (0, 10)$, $\rho_i \in (0, 5)$, $c_i \in (0, 15)$, $O_{\textrm{ref}} \in (0, 20)$ and $\sigma \in (0, 50)$. The parameters describing the stopping criteria for all instances are chosen to be the same and are given as $\epsilon_{\text{feas}} = 10^{-8}$ and $\epsilon = 10^{-10}$, and for all cases the initial iterates are chosen to be $\lambda^{(0)} = v^{(0)} = \mathbf 1$ and $x^{(0)} = (c_1/2, \dots, c_q/2, 1, \dots, 1)$. Also the parameters used for computing the step sizes are chosen to be $\alpha = 0.05$ and $\beta = 0.5$. In the worst case the primal-dual algorithm converged after 14 iterations. The convergence behavior of the algorithm for this instance of the problem is studied by monitoring the primal and dual residuals, the surrogate duality gap and the distance to the optimal solution, as depicted in Figure \ref{fig:conv}. As expected the behavior resembles that of a primal-dual method. The optimal solution $x^*$ used for generating Figure \ref{fig:conv}-c is computed using YALMIP toolbox, \cite{lof:04}.
\begin{figure}[t]
\begin{center}
\includegraphics[width=13cm]{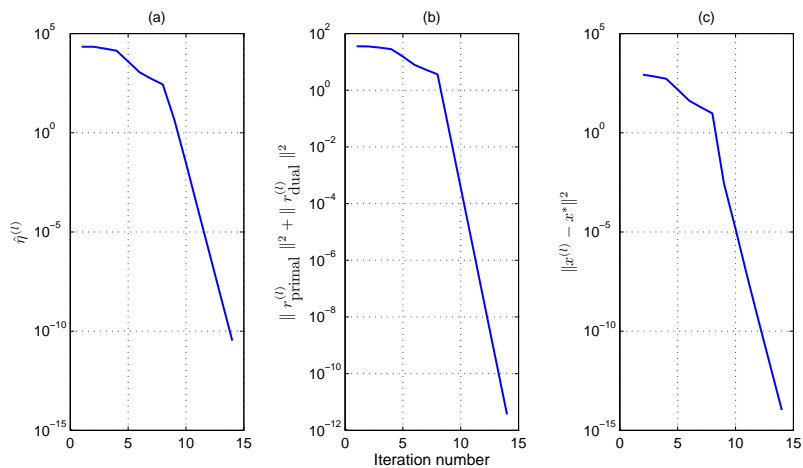}    
\caption{\small The corresponding clique tree for the sparsity graph of the flow problem \normalsize }
\label{fig:conv}
\end{center}
\end{figure}
Also the worst case total number of backtracking steps for computing step sizes was equal to 7, which was also obtained for this instance of the problem. So in total we required $2\times3\times(7 + 3\times14) = 294$ steps of message-passing to converge to the optimal solution out of which only 42 steps required agents to compute a factorization and the rest were computationally trivial. Notice that during the run of this distributed algorithm, each agent needed to compute a factorization of a small matrix only 14 times and required to communicate with its neighbors $98$ times.

We also tested the performance of the algorithm using a larger flow problem. The tree used for describing this problem was of height $L = 14$ and was generated such that all agents, except the ones at the leaves, would have two children. A tree generated in this manner then comprises of $2^{14+1} - 1 = 32767$ nodes and the problem defined on this tree has 65534 variables. The parameters that were used for defining the problem and that were used in the algorithm were chosen in the same manner as above. For this problem the primal-dual algorithm converged after 27 iterations and required a total of 21 backtracking steps. The distributed algorithm hence converged after $2856$ steps during which each agent required computing $27$ factorizations and needed to communicate with its neighbors $204$ times.
\section{Conclusion}\label{sec:conclude}
In this paper we proposed a distributed optimization algorithm based on a primal-dual interior-point method. This algorithm can be used for solving loosely coupled problems, as defined in Section \ref{sec:loose}. Our proposed algorithm relies solely on second order optimization methods, and hence enjoys superior convergence properties in comparison to other existing distributed algorithms for solving loosely coupled problems. Specifically, we showed that the algorithm converges to a very high accuracy solution of the problem after a finite number of steps that entirely depends on the coupling structure in the problem, particularly the length of the clique tree of its corresponding sparsity graph.

\appendices
\section{Proof of Theorem \ref{thm:thm2}}\label{app:app3}

We prove this theorem by induction. Firstly, note that for all neighboring agents $i$ and~$j$,
\begin{align}\label{eq:var}
V_{ij} \setminus S_{ij} = \left[  \bigcup_{k \in \Ne(i) \setminus \{ j \}} \left( V_{ki} \setminus S_{ik} \right) \right] \cup \left( C_i \setminus S_{ij} \right).
\end{align}
Moreoverو
\begin{align}\label{eq:var1}
 C_i \cap \left(V_{ki} \setminus S_{ik}\right)  = \emptyset \quad \forall \ k \in \Ne(i) ,
\end{align}
and
\begin{align}\label{eq:var2}
\left(V_{z_1i} \setminus S_{iz_1}\right) \cap \left(V_{z_2i} \setminus S_{iz_2}\right)  = \emptyset  \quad \forall \ z_1, z_2 \in \Ne(i),  \ z_1 \neq z_2,
\end{align}
where \eqref{eq:var2} is because the clique tree is assumed to satisfy the clique intersection property. These properties can also be verified for the clique tree in Figure \ref{fig:SC}. For instance let us consider agent 2 for which we have
\begin{equation}
\begin{split}
V_{21} \setminus S_{21} & = \{ 1, 3, 4, 6, 7, 8 \} \setminus \{ 1, 4\}\\
&=\left[  \bigcup_{k \in \Ne(2) \setminus \{ 1 \}} \left( V_{k2} \setminus S_{2k} \right) \right] \cup \left( C_2 \setminus S_{21} \right)\\
& = \left( V_{42} \setminus S_{24}\right) \cup \left( V_{52} \setminus S_{25}\right) \cup \left( C_2 \setminus S_{21} \right) \\
& = \left( \{ 3, 6, 7 \} \setminus \{ 3 \} \right) \cup \left( \{ 3, 8 \} \setminus \{ 3 \} \right) \cup \left( \{ 1, 3, 4 \} \setminus \{ 1 , 4 \} \right)\\
& =  \{ 6, 7 \} \cup \{ 8 \} \cup \{ 3 \},
\end{split}
\end{equation}
where as expected from \eqref{eq:var1} and \eqref{eq:var2}, the three sets making $V_{21} \setminus S_{21}$ are jointly disjoint.

We start the induction by first showing that \eqref{eq:thm2} holds for all the messages originating from the leaves of the tree, i.e., for all $i \in \leaves (T)$. This follows because for these nodes $W_{ij} = \{ i \}$ and hence $V_{ij} = C_i$. Now let us assume that $i$ is a node in the middle of the tree with neighbors $\Ne(i) = \{ k_1, \dots, k_m \}$, see Figure \ref{fig:CT}, and that
\begin{align}\label{eq:thm2a}
m_{k_ji} (x_{_{S_{k_ji}}}) = \minimum_{x_{_{V_{k_ji} \setminus S_{ik_j} }}} \left\{ \sum_{t \in \Phi_{k_ji}} \bar F_t(x_{_{J_t}}) \right\}  \quad \forall \ j = 1, \dots, m.
\end{align}
Then \eqref{eq:mij} can be rewritten as
\begin{multline}\label{eq:mijthm}
m_{ij}(x_{_{S_{ij}}}) =  \minimum_{x_{_{C_i \setminus S_{ij}}}} \huge \left\{  \sum_{t \in \phi_i} \bar F_t(x_{_{J_t}}) + \right.  \\ \left. \underbrace{\minimum_{x_{_{V_{k_1i} \setminus S_{ik_1} }}} \left\{ \sum_{t \in \Phi_{k_1i}}\bar F_t(x_{_{J_t}}) \right\}}_{m_{k_1i}} + \dots + \underbrace{\minimum_{x_{_{V_{k_m} \setminus S_{ik_m} }}} \left\{ \sum_{t \in \Phi_{k_mi}}\bar F_t(x_{_{J_t}}) \right\}}_{m_{k_mi}}\right\}.
\end{multline}
Note that $\Phi_{ij} \setminus \phi_i = \bigcup _{t \in \Ne(i) \setminus \{ j \} }\Phi_{ti}$ with $\Phi_{z_1i} \cap \Phi_{z_2i} = \emptyset,   \ \forall z_1, z_2 \in \Ne(i)\setminus \{ j \}, \ z_1 \neq z_2$. This is guaranteed since each component of the objective function is assigned to only one agent. Then by \eqref{eq:var1} and \eqref{eq:var2} we have
\begin{align}\label{eq:thm2b}
m_{ij}(x_{_{S_{ij}}}) =  \quad \minimum_{x_{_{C_i \setminus S_{ij}}}}\minimum_{x_{_{V_{k_1i} \setminus S_{ik_1} }}}\dots \minimum_{x_{_{V_{k_mi} \setminus S_{ik_m} }}} \left\{ \sum_{t \in \Phi_{ij}}\bar F_t(x_{_{J_t}}) \right\}.
\end{align}
Now we can merge all the minimum operators together and, by \eqref{eq:var}, rewrite \eqref{eq:thm2b} as
\begin{align}\label{eq:thm2c}
m_{ij} (x_{_{S_{ij}}}) = \minimum_{x_{_{V_{ij} \setminus S_{ij} }}} \left\{  \sum_{t \in \Phi_{ij}}\bar F_t(x_{_{J_t}}) \right\},
\end{align}
which completes the proof.
\section{Proof of Theorem \ref{thm:thm3}}\label{app:app4}

Using Theorem \ref{thm:thm2}, we can rewrite \eqref{eq:RLocalProblem} as
\begin{multline}\label{eq:mijthm3}
x^\ast_{_{C_r}} =  \argmin_{x_{_{C_r}}}  \left\{  \sum_{k \in \phi_r} \bar F_k(x_{_{J_k}})  + \underbrace{\minimum_{x_{_{V_{k_1r} \setminus S_{rk_1} }}} \left\{ \sum_{t \in \Phi_{k_1r}}\bar F_t(x_{_{J_t}}) \right\}}_{m_{k_1r}} + \dots \right. \\ \left. \hspace{28mm}+ \underbrace{\minimum_{x_{_{V_{k_rr} \setminus S_{rk_r} }}} \left\{  \sum_{t \in \Phi_{k_rr}}\bar F_t(x_{_{J_t}}) \right\}}_{m_{k_rr}}\right\}.
\end{multline}
where we have assumed that $\Ne(r) = \{ k_1, \dots, k_r \}$. Note that $\mathbb N_n \setminus C_r = \bigcup_{k \in \Ne(r)} V_{kr} \setminus S_{rk}$. Then by \eqref{eq:var2} we can push the \emph{minimum} operators together and rewrite~\eqref{eq:mijthm3} as
\begin{align}\label{eq:mijthm3a}
x^\ast_{_{C_r}} = \argmin_{x_{_{C_r}}}  \left\{  \sum_{k \in \phi_r} \bar F_k(x_{_{J_k}})  + \minimum_{x_{_{\mathbb N_n\setminus C_r }}} \left\{ \sum_{k \in \Ne(r)} \sum_{t \in \Phi_{kr}}\bar F_t(x_{_{J_t}}) \right\} \right\}.
\end{align}
Moreover, since $\mathbb N_N \setminus \phi_r = \bigcup_{k \in Ne(r)} \Phi_{kr}$ and that $\bigcup_{k \in \phi_r} J_k \subseteq C_r$, we can further simplify~\eqref{eq:mijthm3a} as
\begin{align}\label{eq:LocalProblem1a}
x^\ast_{_{C_r}} = \argmin_{x_{_{C_r}}} \left\{  \minimum_{x_{_{\mathbb N_n \setminus C_r}}} \left\{ \bar F_1(x_{_{J_1}}) + \dots, \bar F_N(x_{_{J_N}})\right\}\right\},
\end{align}
which completes the proof.

\newpage


\begin{thebibliography}{28}

\bibitem{ang:14}
M.~{Annergren}, S.~{Khsohfetrat Pakazad}, A.~{Hansson}, and B.~{Wahlberg}.
\newblock {A distributed primal-dual interior-point method for loosely coupled problems using ADMM}.
\newblock {\em ArXiv e-prints}, Feb. 2015.

\bibitem{and:13}
M. S.~{Andersen}, J.~{Dahl}, and L.~{Vandenberghe}.
\newblock {Logarithmic barriers for sparse matrix cones}.
\newblock {\em Optimization Methods and Software}, 28(3):396--423,
  2013.


\bibitem{ber:73}
U.~Bertelè and F.~Brioschi.
\newblock On non-serial dynamic programming.
\newblock {\em Journal of Combinatorial Theory, Series A}, 14(2):137--148,
  1973.

\bibitem{ber:00}
D.~P. Bertsekas.
\newblock {\em Dynamic Programming and Optimal Control}.
\newblock Athena Scientific, 2nd edition, 2000.

\bibitem{ber:97}
D.~P. Bertsekas and J.~N. Tsitsiklis.
\newblock {\em Parallel and Distributed Computation: Numerical Methods}.
\newblock Athena Scientific, 1997.

\bibitem{blp:94}
J.~R.~S. Blair and B.~W. Peyton.
\newblock An introduction to chordal graphs and clique trees.
\newblock In {\em Graph
  Theory and Sparse Matrix Computations}, volume~56, pages 1--27.
  Springer-Verlag, 1994.

\bibitem{boyd:11}
S.~Boyd, N.~Parikh, E.~Chu, B.~Peleato, and J.~Eckstein.
\newblock Distributed optimization and statistical learning via the alternating
  direction method of multipliers.
\newblock {\em Foundations and Trends in Machine Learning}, 3(1):1--122, 2011.

\bibitem{boyd:04}
S.~Boyd and L.~Vandenberghe.
\newblock {\em Convex Optimization}.
\newblock Cambridge University Press, 2004.

\bibitem{Chu:11}
E.~Chu, D.~Gorinevsky, and S.~Boyd.
\newblock Scalable statistical monitoring of fleet data.
\newblock In {\em Proceedings of the 18th IFAC World Congress}, pages
  13227--13232, Milan, Italy, August 2011.

\bibitem{com:11}
P.~L. Combettes and J.-C. Pesquet.
\newblock Proximal splitting methods in signal processing.
\newblock In {\em Fixed-Point Algorithms for Inverse Problems in Science and
  Engineering}, volume~49 of {\em Springer Optimization and Its Applications},
  pages 185--212. Springer New York, 2011.

\bibitem{cor:01}
T. H. Cormen, C. E. Leiserson, R. L. Rivest, and C.~Stein.
\newblock {\em Introduction To Algorithms}.
\newblock MIT Press, 2001.



\bibitem{duf:83}
I.~S. Duff and J.~K. Reid.
\newblock The multifrontal solution of indefinite sparse symmetric linear equations.
\newblock {\em ACM Trans. Math. Softw.}, 9(3):302--325,
  1983.

\bibitem{eck:89}
J.~Eckstein.
\newblock {\em Splitting methods for monotone operators with application to
  parallel optimization}.
\newblock Ph{D} dissertation, {Massachussets Intitute of Technology}, 1989.

\bibitem{gol:12}
D.~Goldfarb, S.~Ma, and K.~Scheinberg.
\newblock Fast alternating linearization methods for minimizing the sum of two
  convex functions.
\newblock {\em Mathematical Programming}, pages 1--34, 2012.

\bibitem{gols:12}
T.~Goldstein, B.~O�Donoghue, and S.~Setzer.
\newblock Fast alternating direction optimization methods.
\newblock Technical Report CAM report 12-35, UCLA, 2012.

\bibitem{gol:13}
G.H. Golub and C.F. Van~Loan.
\newblock {\em Matrix Computations}.
\newblock Johns Hopkins Studies in the Mathematical Sciences. Johns Hopkins
  University Press, 2013.

\bibitem{gol:04}
M.~C. Golumbic.
\newblock {\em Algorithmic Graph Theory and Perfect Graphs}.
\newblock Elsevier, 2nd edition, 2004.

\bibitem{gon:07}
J.~Gondzio and A.~Grothey.
\newblock Parallel interior-point solver for structured quadratic programs:
  {A}pplication to financial planning problems.
\newblock {\em Annals of Operations Research}, 152(1):319--339, 2007.

\bibitem{gon:09}
J.~Gondzio and A.~Grothey.
\newblock Exploiting structure in parallel implementation of interior point
  methods for optimization.
\newblock {\em Computational Management Science}, 6(2):135--160, 2009.

\bibitem{kho:14}
S.~Khoshfetrat~Pakazad, A.~Hansson, and M.~S. Andersen.
\newblock Distributed interior-point method for loosely coupled problems.
\newblock In {\em Proceedings of the 19th IFAC World Congress}, Cape Town,
  South Africa, August 2014.

\bibitem{kol:09}
D.~Koller and N.~Friedman.
\newblock {\em Probabilistic Graphical Models: Principles and Techniques}.
\newblock MIT press, 2009.

\bibitem{liu:92}
J. Liu and J.~K. Reid.
\newblock The multifrontal method for sparse matrix solution: Theory and practice.
\newblock {\em SIAM Review}, 34(1):82--109,
  1992.

\bibitem{lof:04}
J. Löfberg.
\newblock YALMIP : A toolbox for modeling and optimization in {MATLAB}.
\newblock In {\em Proceedings of the CACSD Conference}, Taipei, Taiwan, 2004.


\bibitem{moa:07}
C.~C. Moallemi.
\newblock {\em A message-passing paradigm for optimization}.
\newblock Ph{D} dissertation, {S}tanford university, 2007.

\bibitem{nec:09}
I.~Necoara and J.~A.~K. Suykens.
\newblock Interior-point lagrangian decomposition method for separable convex
  optimization.
\newblock {\em Journal of Optimization Theory and Applications},
  143(3):567--588, 2009.

\bibitem{ned:09}
A.~Nedic and A.~Ozdaglar.
\newblock Distributed subgradient methods for multi-agent optimization.
\newblock {\em IEEE Transactions on Automatic Control}, 54(1):48--61, 2009.

\bibitem{ned:10}
A.~Nedic, A.~Ozdaglar, and P.A. Parrilo.
\newblock Constrained consensus and optimization in multi-agent networks.
\newblock {\em IEEE Transactions on Automatic Control}, 55(4):922--938, April
  2010.

\bibitem{ohl:13}
H.~{Ohlsson}, T.~{Chen}, S.~{Khoshfetrat Pakazad}, L.~{Ljung}, and S.~{Shankar
  Sastry}.
\newblock {Scalable anomaly detection in large homogenous populations}.
\newblock {\em ArXiv e-prints}, September 2013.

\bibitem{pea:82}
J.~Pearl.
\newblock Reverend bayes on inference engines: A distributed hierarchical
  approach.
\newblock In {\em proceedings of the National Conference on Artificial
  Intelligence}, pages 133--136, 1982.

\bibitem{shc:07}
O.~Shcherbina.
\newblock Nonserial dynamic programming and tree decomposition in discrete
  optimization.
\newblock In {\em Operations Research Proceedings}, pages 155--160. Springer Berlin Heidelberg, 2007.

\bibitem{sum:12}
T.H. Summers and J.~Lygeros.
\newblock Distributed model predictive consensus via the alternating direction
  method of multipliers.
\newblock In {\em 50th Annual Allerton Conference on Communication, Control,
  and Computing}, pages 79--84, 2012.



\bibitem{sun:14}
Y. Sun, M. S. Andersen, and L. Vandenberghe.
\newblock Decomposition in Conic Optimization with Partially Separable Structure.
\newblock {\em SIAM Journal on Optimization,}, 24(2):873--897,
  Jun 2014.

\bibitem{wai:05}
M. J. Wainwright, T. S. Jaakkola, and A.S. Willsky.
\newblock {MAP} estimation via agreement on trees: {M}essage-passing and linear
  programming.
\newblock {\em IEEE Transactions on Information Theory,}, 51(11):3697--3717,
  Nov 2005.


\bibitem{wei:13}
E.~Wei, A.~Ozdaglar, and A.~Jadbabaie.
\newblock A distributed {N}ewton method for network utility maximization--{I}:
  {A}lgorithm.
\newblock {\em IEEE Transactions on Automatic Control}, 58(9):2162--2175, 2013.

\bibitem{wri:97}
S.~J. Wright.
\newblock {\em Primal-dual Interior-point Methods}.
\newblock Society for Industrial and Applied Mathematics, 1997.

\end{thebibliography}

\end{document}